\newcommand{\x}{\mathbf{x}}
\newcommand{\yy}{\mathbf{y}}
\newcommand{\sss}{\mathbf{s}}
\newcommand{\g}{\mathbf{g}}
\newcommand{\dd}{\mathbf{d}}
\newcommand{\T}{\text{T}}
\newcommand{\BB}{Barzilai-Borwein }
\newcommand{\BBB}{Barzilai and Borwein }
\theoremstyle{plain}
\newtheorem{theorem}{Theorem}
\newtheorem{lemma}[theorem]{Lemma}
\newtheorem{proposition}[theorem]{Proposition}
\theoremstyle{remark}
\newtheorem{remark}{Remark}
\title{\bf A Parameterized \BB Method via Interpolated Least Squares\thanks{The work was supported by the National Natural Science Foundation of China (Project No. 12371099).}}
\author{Xin Xu\thanks{Corresponding author: xustonexin@gmail.com}}
\affil{School of Mathematics, Southwestern University of Finance and Economics, Chengdu, China}
\date{}
\begin{document}
\maketitle

\begin{abstract}
	The \BB(BB) method is an effective gradient descent algorithm for solving unconstrained optimization problems. Based on the observation of two classical BB step sizes, by constructing an interpolated least squares model, we propose a novel class of BB step sizes, each of which still retains the quasi-Newton property, with the original two BB step sizes being their two extreme cases. We present the mathematical principle underlying the adaptive alternating BB (ABB) method. Based on this principle, we develop a class of effective adaptive interpolation parameters. For strictly convex quadratic optimization problems, we establish the $R$-linear convergence of this new gradient descent method by investigating the evolution pattern of the ratio of the absolute values of the gradient components. Numerical experiments are conducted to illustrate our findings.
	
	\medskip
	\noindent{\bf Keywords:} \BB method, interpolation, parameter, 
	least squares, step size.
	
	\medskip
	\noindent{\bf Mathematics Subject Classification:} 90C20, 90C25, 90C30.
\end{abstract}

\section{Introduction}
In this paper, we consider the unconstrained optimization problem
\begin{equation}\label{generalqua}
\min_{\x\in\mathbb{R}^{n}} f(\x),
\end{equation}
where $f:\mathbb{R}^n\longrightarrow \mathbb{R}$ is continuously differentiable. A minimizer is denoted by $\x_{*}$. The gradient descent method for solving \eqref{generalqua} is an iterative method of the form 
\begin{equation}\label{gradient method}
\x_{k+1}=\x_{k}-\alpha_{k}^{-1}\g_{k},
\end{equation}
where $\x_{k}$ is the $k$th approximation to $\x_{*}$, $\g_{k}:=\nabla f(\x_{k})$ is the gradient of $f$ at $\x_{k}$, and $\alpha_{k}^{-1}$ is the step size. The classical steepest descent (SD) method dates back to Cauchy \cite{Cauchy2009Methodegeneralepour} with $\alpha_{k}$ is taken as the exact minimizer of $f(\x_{k}-\alpha_{k}^{-1}\g_{k})$. Nevertheless, in practice, the SD method is not efficient and usually converges slowly due to zigzag behavior \cite{Akaike1959successivetransformationprobability}. In order to improve the performance of SD method, \BBB(BB)  \cite{Barzilai1988TwoPointStep} introduced two novel scalars $\alpha_{k}$ by the following two least squares models
\begin{equation}\label{LS}
\min_{\alpha\in\mathbb{R}} \|\alpha \sss_{k-1} - \yy_{k-1}\|_{2}^{2},\quad \min_{\alpha\in \mathbb{R}} \|\sss_{k-1} -  {\alpha}^{-1}\yy_{k-1}\|_{2}^{2},
\end{equation} 
where $\sss_{k-1}=\x_{k}-\x_{k-1}$ and $\yy_{k-1}=\g_{k}-\g_{k-1}$. The solutions to \eqref{LS} are 
\begin{equation}\label{BB steps}
	\alpha_{k}^{BB1}:=\frac{\sss_{k-1}^{\T}\yy_{k-1}}{\sss_{k-1}^{\T}\sss_{k-1}},\quad \alpha_{k}^{BB2}:=\frac{\yy_{k-1}^{\T}\yy_{k-1}}{\sss_{k-1}^{\T}\yy_{k-1}},
\end{equation}
respectively. Assume that  $\sss_{k-1}^{\T}\yy_{k-1}>0$, by the Cauchy-Schwarz inequality \cite{Golub2013MatrixComputations}, we know that $\alpha_{k}^{BB1}\le\alpha_{k}^{BB2}$ holds. We refer to the gradient descent methods with step size $(\alpha_{k}^{BB1})^{-1}$ and $(\alpha_{k}^{BB2})^{-1}$ as the BB1 and BB2 methods, respectively. In the least squares sense, BB method approximates the Hessian of $f(\x_{k})$ using $\alpha_{k}^{BB1}I$ or $\alpha_{k}^{BB2}I$, where $I$ is the identity matrix.

Such simple and efficient step sizes yield an excellent $R$-superlinear convergence in the two dimensional strictly convex quadratic problems \cite{Barzilai1988TwoPointStep}, in which the authors showed that in any sequence of four consecutive iterations, at least one step exhibits superlinear convergence. For any dimensional strictly convex quadratic problems, BB method is shown to be globally convergent \cite{Raydan1993BarzilaiBorweinchoice} and the convergence is $R$-linear \cite{Dai2002Rlinearconvergence}. The superior performance of the BB method demonstrates that the behavior of the SD method cannot be attributed solely to its search direction, while the design of efficient step size is the critical factor.   

Although the BB method does not decrease the objective function value monotonically, extensive experiments show that it performs much better than the SD method \cite{Friedlander1999GradientMethodRetards,Fletcher2005BarzilaiBorweinMethod,Dai2002ModifiedTwoPoint,Yuan2018StepSizesGradient}. There are several intensive researches about the BB method. For example, \text{Zhou} et al. \cite{Zhou2006GradientMethodsAdaptive} proposed an adaptive alternating BB (ABB) method. \text{Dai} et al. \cite{Dai2006cyclicBarzilaiBorwein} developed a cyclic BB method, in which the same BB step size is reused for $m$ consecutive iterations. \text{Burdakov} et al. \cite{Burdakov2019StabilizedBarzilaiBorwein} presented a stabilized BB method. Huang et al. \cite{Huang2021EquippingBarzilaiBorwein, Huang2022accelerationBarzilaiBorweinmethod} considered to accelerate BB method by requiring quadratic termination for minimizing the two dimensional strongly convex quadratic function. \text{Ferrandi} et al. \cite{Ferrandi2023harmonicframeworkstepsize} proposed a harmonic framework for step size selection. Recently, Li et al. \cite{Li2024familyBarzilaiBorwein} presented a family of BB step sizes from the view point of scaled total least squares.  
For the \text{nonquadratic} problems, Raydan \cite{Raydan1997BarzilaiBorweinGradienta} proposed a globalized BB gradient method with the non-monotonic line search technique proposed by Grippo et al.  \cite{Grippo1991classnonmonotonestabilization} for solving large scale optimization problems. This non-monotonic line search technique not only aligns with the convergence characteristics of the BB method but also guarantees its convergence for general objective functions. Consequently, following this advancement, the BB method has been extensively applied to various optimization problems. To mention a few of them, Dai et al.  \cite{Dai2005ProjectedBarzilaiBorwein,Dai2005Newalgorithmssingly} developed projected BB-like methods for special quadratic programs arising from training support vector machine, that has a singly linear constraint in addition to box constraints. The BB method enjoys many important applications, such as constrained
image \text{deblurring} \cite{Bonettini2009scaledgradientprojection}, \text{nonlinear} least squares \cite{Mohammad2019StructuredTwoPoint}, variational inequality \cite{Dong2018Predictioncorrectionmethod}, \text{Stiefel} manifold optimization \cite{Gao2018NewFirstOrder}, deep learning \cite{Liang2019BarzilaiBorweinbasedadaptive}, spherical $t$-designs \cite{An2020Numericalconstructionspherical}, etc.  

As is well know, an algorithm that alternately uses two BB step sizes is usually better than an algorithm that uses only one BB step size. The reason is that alternating BB step size is beneficial to improving the algorithm's ability to scan the Hessian spectrum. Inspired by this, in order to fully exploit the spectral properties of the two BB step sizes, we attempt to use a unified form to handle these two step sizes. By observing the two original least squares models in  \eqref{LS}, a feasible approach is to unify the two least squares models into one least squares model by modifying the degree of $\alpha$, which provides us with a novel perspective on BB method. 

This paper makes following three contributions.
\begin{itemize}
	\item We introduce an interpolated least squares model in which the degree of $\alpha$ is a variable located in $[0,1]$. Thus, we obtain a class of BB-like step sizes with quasi-Newton properties, and the original two BB step sizes correspond to its two extreme cases.
	\item From the perspective of gradient element elimination, we provide an analysis of the ABB method. Based on this principle, we propose an effective interpolation parameter scheme. For strictly convex quadratic optimization problems, a remarkable result is that when the largest eigenvalue $\lambda$ of the Hessian is a large real number, $\alpha_{k}^{BB2}$ that satisfies the alternating criterion approximates $\lambda$, while $\alpha_{k}^{BB1}$ approaches the smallest eigenvalue, thereby achieving rapid elimination of gradient components. It is evident that the alternating strategy enhances the algorithm's ability to traverse the Hessian spectrum.
	\item For strictly convex quadratic optimization problems, we establish the $R$-linear convergence of this new gradient descent method by studying the variation pattern of the ratio of the absolute values of the gradient components. The convergence of many effective spectral gradient descent methods can be analyzed using this framework.
	
\end{itemize}

The remaining part of this paper is organized as follows. In Section \ref{sec:2}, we propose  parameterized \BB (PBB) method through an interpolated least squares model. In Section \ref{sec:convergence}, we establish the $R$-linear convergence of the PBB method. In Section \ref{sec:ABB}, we analyze the principle of the ABB method, which provides a basis for  selecting interpolation parameters in subsequent section. In Section \ref{sec:mk}, based on the analysis in the preceding section, we propose an effective interpolation parameter selection scheme. In Section \ref{sec:nonqua}, we generalize the PBB method to nonquadratic optimization problems. In Section \ref{sec:numerical}, we present the numerical results of the PBB method for both quadratic and nonquadratic optimization problems. In the final section, we present the conclusion and discuss future research directions. Throughout this paper, we refer to $\mathbf{1}$ and $\mathbf{0}$ as vectors of all ones and zeros, respectively, $\kappa(A)$ as the condition number of the matrix $A$. 

\section{Parameterized \BB (PBB) method}\label{sec:2}
In this section, we introduce an interpolated least squares model, which is a completeness of the least squares models in the original BB method. Based on this interpolated least squares model, we obtain a family of BB-like step sizes, and the original BB step sizes are its two extreme cases. We only consider the case $\sss_{k-1}^{\T}\yy_{k-1}>0$ in this section.

\subsection{An interpolated least squares model}
Observing the least squares models \eqref{LS} in the BB method, we find that the two models \eqref{LS} can be rewritten as follows
\begin{equation*}\label{LLS}
\min_{\alpha\in\mathbb{R}} \|\alpha^{1} \sss_{k-1} -\alpha^{0} \yy_{k-1}\|_{2}^{2},\quad \min_{\alpha\in \mathbb{R}} \|\alpha^{0}\sss_{k-1} -  \alpha^{-1}\yy_{k-1}\|_{2}^{2},
\end{equation*} 
that is, the degree of the variable corresponding to $\sss_{k-1}$ is $1$ greater than that of $\yy_{k-1}$. Based on this observation, we propose the following interpolated least squares model
\begin{equation}\label{VLS}
	\min_{\alpha>0} L(\alpha):=\|\alpha^{m_k} \sss_{k-1} -\alpha^{m_k-1} \yy_{k-1}\|_{2}^{2},
\end{equation}
where $m_k\in[0, 1]$. Obviously, $\alpha_{k}^{BB1}$ and $\alpha_{k}^{BB2}$ correspond to $m_k=1$ and $m_k=0$ in \eqref{VLS}, respectively. We now consider $m_k\in(0, 1]$. Taking the derivative of the objective function $L(\alpha)$ in \eqref{VLS} with respect to $\alpha$, we have 
\begin{equation}\label{equ:LA}
	L'(\alpha)=2\alpha^{2m_{k}-3}\big[m_{k}\sss_{k-1}^{\T}\sss_{k-1}\alpha^{2}-(2m_{k}-1)\sss_{k-1}^{\T}\yy_{k-1}\alpha+(m_{k}-1)\yy_{k-1}^{\T}\yy_{k-1}\big].
\end{equation}
The first-order optimality condition of problem \eqref{VLS} is $L'(\alpha)=0$. Based on this, along with condition \eqref{equ:LA} and $\alpha\ne0$, we know that solving \eqref{VLS} is equivalent to solving the following quadratic equation
\begin{equation}\label{quadratic equ}
	m_k\sss_{k-1}^{\T}\sss_{k-1}\alpha^{2}-(2m_k-1)\sss_{k-1}^{\T}\yy_{k-1}\alpha+(m_k-1)\yy_{k-1}^{\T}\yy_{k-1}=0.
\end{equation} 
Let 
$$\theta_{k}=\angle(\sss_{k-1}, \yy_{k-1}).$$
Then we have 
\begin{equation}\label{equ:cosk}
	\cos^{2}\theta_{k}=\frac{(\sss_{k-1}^{\T}\yy_{k-1})^2}{\|\sss_{k-1}\|_{2}^{2}\|\yy_{k-1}\|_{2}^{2}}.
\end{equation}
According to the discriminant of the quadratic equation \eqref{quadratic equ} as follows  $$\Delta=(2m_{k}-1)^2(\sss_{k-1}^{\T}\yy_{k-1})^2-4m_{k}(m_{k}-1)\sss_{k-1}^{\T}\sss_{k-1}\yy_{k-1}^{\T}\yy_{k-1},$$ 
if
\begin{equation}\label{Discriminant}
	\cos^{2}\theta_{k}\ge\frac{4m_k(m_k-1)}{(2m_k-1)^{2}},
\end{equation}
then the equation \eqref{quadratic equ} has real roots. Under the condition of $m_k\in(0, 1]$, we know that \eqref{Discriminant} always holds. Let the two real roots of \eqref{quadratic equ} be $\alpha_{k,1}$ and $\alpha_{k,2}$, and $\alpha_{k,1}\le\alpha_{k,2}$. Then we have 
\begin{equation*}
	\alpha_{k,1}\alpha_{k,2}=\frac{(m_k-1)\yy_{k-1}^{\T}\yy_{k-1}}{m_k\sss_{k-1}^{\T}\sss_{k-1}}\le 0.
\end{equation*}
Therefore, $\alpha_{k,1}$ and $\alpha_{k,2}$ have different signs. We take positive $\alpha_{k,2}$ as $\alpha_{k}^{PBB}$, and have
\begin{equation}\label{VBB}
	\alpha_{k}^{PBB}=\frac{(2m_k-1)\sss_{k-1}^{\T}\yy_{k-1}+\sqrt{\big((2m_k-1)\sss_{k-1}^{\T}\yy_{k-1}\big)^{2}-4m_k(m_k-1)\sss_{k-1}^{\T}\sss_{k-1}\yy_{k-1}^{\T}\yy_{k-1}}}{2m_k\sss_{k-1}^{\T}\sss_{k-1}},
\end{equation} 
where $m_k\in(0, 1]$. By choosing different parameter $m_k$, we obtain a family of BB-like step sizes $(\alpha_{k}^{PBB})^{-1}$. We formally call the gradient descent method with step size $(\alpha_{k}^{PBB})^{-1}$ the PBB method. From the perspective of interpolated least squares, the PBB method achieves interpolation between the classical BB1 and BB2 step sizes by selecting interpolation parameter $m_{k}$.

Some properties of $\alpha_{k}^{PBB}$ are summarized in Theorem \ref{theorem1} as follows.
\begin{theorem}\label{theorem1}
	Let $m_k\in(0, 1]$.  Then
	$\alpha_{k}^{PBB}$ is monotonically decreasing with respect to $m_k$, and 
	\begin{equation*}
	\begin{split}
	&\alpha_{k}^{PBB}\in[\alpha_{k}^{BB1},\alpha_{k}^{BB2}).\\
	\end{split}
	\end{equation*}
\end{theorem}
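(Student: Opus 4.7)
The plan is to treat $\alpha_{k}^{PBB}$ implicitly as the larger root of the quadratic \eqref{quadratic equ}. Writing $a = \sss_{k-1}^{\T}\sss_{k-1}$, $b = \sss_{k-1}^{\T}\yy_{k-1}$ and $c = \yy_{k-1}^{\T}\yy_{k-1}$ (so $a,c>0$, $b>0$, and $b^{2}\le ac$ by Cauchy--Schwarz), define
$$F(m_k,\alpha) := m_k\, a\,\alpha^{2} - (2m_k-1)\,b\,\alpha + (m_k-1)\,c.$$
Because the leading coefficient $m_k a$ is positive, the parabola $\alpha\mapsto F(m_k,\alpha)$ opens upward, and by the sign analysis already performed in the excerpt its larger real root is $\alpha_{k}^{PBB}$, while the smaller one is non-positive.

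To locate $\alpha_{k}^{BB1}=b/a$ and $\alpha_{k}^{BB2}=c/b$ relative to these roots, I would substitute them directly. The two substitutions collapse neatly to
$$F\!\left(m_k,\tfrac{b}{a}\right) = (1-m_k)\,\tfrac{b^{2}-ac}{a}\le 0, \qquad F\!\left(m_k,\tfrac{c}{b}\right) = m_k\,\tfrac{c(ac-b^{2})}{b^{2}}\ge 0.$$
The first inequality says $\alpha_{k}^{BB1}$ lies between the two roots of $F(m_k,\cdot)$, forcing $\alpha_{k}^{BB1}\le \alpha_{k}^{PBB}$. The second says $\alpha_{k}^{BB2}$ lies outside the root bracket; combined with $\alpha_{k}^{BB2}>0$ and the fact that the smaller root is non-positive, this gives $\alpha_{k}^{BB2}\ge \alpha_{k}^{PBB}$, strictly unless $b^{2}=ac$ (the degenerate parallel case, in which $\alpha_{k}^{BB1}=\alpha_{k}^{BB2}$ and both coincide with $\alpha_{k}^{PBB}$).

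For the monotonicity in $m_k$, implicit differentiation of $F(m_k,\alpha_{k}^{PBB})=0$ yields
$$\frac{d\alpha_{k}^{PBB}}{dm_k} = -\frac{\partial F/\partial m_k}{\partial F/\partial \alpha}.$$
The key observation is that
$$\frac{\partial F}{\partial m_k}(m_k,\alpha) = a\alpha^{2} - 2b\alpha + c = \|\alpha\,\sss_{k-1}-\yy_{k-1}\|_{2}^{2}\ge 0,$$
so the numerator is always non-negative. The denominator $\partial F/\partial\alpha = F'(\alpha)$ is strictly positive at the larger root of an upward-opening parabola. Hence $d\alpha_{k}^{PBB}/dm_k\le 0$, proving monotonicity. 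Combining with the boundary value $\alpha_{k}^{PBB}\big|_{m_k=1}=\alpha_{k}^{BB1}$ (read off directly from \eqref{quadratic equ} at $m_k=1$) and the limit $\lim_{m_k\to 0^{+}}\alpha_{k}^{PBB}=\alpha_{k}^{BB2}$ (a short Taylor expansion of \eqref{VBB}) recovers the interval $[\alpha_{k}^{BB1},\alpha_{k}^{BB2})$.

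The main obstacle is conceptual rather than computational: recognizing that $\partial F/\partial m_k$ equals the original least-squares residual $\|\alpha\,\sss_{k-1}-\yy_{k-1}\|_{2}^{2}$ delivers the correct sign instantly and avoids any manipulation of the radical. A direct differentiation of the closed-form expression \eqref{VBB} is possible but substantially messier, so the implicit route is the clean path.
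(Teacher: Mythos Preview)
Your proposal is correct and takes a genuinely different route from the paper. The paper differentiates the closed-form expression \eqref{VBB} directly, then bounds the resulting radical using Cauchy--Schwarz and splits into the two cases $m_k\in(0,\tfrac12)$ and $m_k\in(\tfrac12,1]$ according to the sign of $2m_k-1$, obtaining $\frac{d}{dm_k}\alpha_k^{PBB}\le 0$ in each case by explicit simplification. Your implicit-differentiation argument bypasses both the radical and the case split: the identity $\partial F/\partial m_k=\|\alpha\,\sss_{k-1}-\yy_{k-1}\|_2^2$ delivers the sign in one line, and positivity of $\partial F/\partial\alpha$ at the larger root is automatic for an upward-opening parabola with distinct roots. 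The paper infers the interval $[\alpha_k^{BB1},\alpha_k^{BB2})$ only from monotonicity together with the boundary values at $m_k=1$ and $m_k\to 0^+$, whereas your substitution $F(m_k,\alpha_k^{BB1})\le 0$ and $F(m_k,\alpha_k^{BB2})\ge 0$ gives the bracketing directly, and in fact yields the \emph{strict} upper bound $\alpha_k^{PBB}<\alpha_k^{BB2}$ (for $m_k>0$ and $b^2<ac$) more transparently than the paper's weak-monotonicity argument. Your approach is shorter and makes the connection to the underlying least-squares residual explicit; the paper's approach is more hands-on but requires no appeal to the implicit function theorem.
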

\begin{proof}
	Based on  $\sss_{k-1}^{\T}\yy_{k-1}\le\|\sss_{k-1}\|_{2}\|\yy_{k-1}\|_{2}$, we calculate the derivative of $\alpha_{k}^{PBB}$ with respect to $m_k$ as follows
	\begin{equation}\label{VBB'}
		\begin{split}
		\frac{d}{dm_k}\big(\alpha_{k}^{PBB}(m_k)\big)&=\frac{1}{2m_k^2}\sss_{k-1}^{\T}\yy_{k-1}+\frac{\frac{2m_k-1}{2m_k}\frac{1}{2m_k^2}(\sss_{k-1}^{\T}\yy_{k-1})^2-\frac{1}{2m_k^2}\|\sss_{k-1}\|_{2}^{2}\|\yy_{k-1}\|_{2}^{2}}{\sqrt{(\frac{2m_k-1}{2m_k})^2(\sss_{k-1}^{\T}\yy_{k-1})^2-\frac{m_k-1}{m_k}\|\sss_{k-1}\|_{2}^{2}\|\yy_{k-1}\|_{2}^{2}}}\\
		&\le\frac{1}{2m_k^2}\sss_{k-1}^{\T}\yy_{k-1}+\frac{\frac{2m_k-1}{2m_k}\frac{1}{2m_k^2}(\sss_{k-1}^{\T}\yy_{k-1})^2-\frac{1}{2m_k^2}(\sss_{k-1}^{\T}\yy_{k-1})^{2}}{\sqrt{(\frac{2m_k-1}{2m_k})^2(\sss_{k-1}^{\T}\yy_{k-1})^2}}.
 		\end{split}
	\end{equation} 	
In the case $m_k\in(0,\frac{1}{2})$, 
\begin{equation}\label{le1}
	\begin{split}
	&\frac{1}{2m_k^2}\sss_{k-1}^{\T}\yy_{k-1}+\frac{\frac{2m_k-1}{2m_k}\frac{1}{2m_k^2}(\sss_{k-1}^{\T}\yy_{k-1})^2-\frac{1}{2m_k^2}(\sss_{k-1}^{\T}\yy_{k-1})^{2}}{\sqrt{(\frac{2m_k-1}{2m_k})^2(\sss_{k-1}^{\T}\yy_{k-1})^2}}\\
	&=\frac{1}{2m_k^2}\sss_{k-1}^{\T}\yy_{k-1}-\frac{1}{2m_k^2}\sss_{k-1}^{\T}\yy_{k-1}+\frac{1}{2m_k^2}\frac{2m_k}{2m_k-1}\sss_{k-1}^{\T}\yy_{k-1}\\
	&=\frac{1}{m_k(2m_k-1)}\sss_{k-1}^{\T}\yy_{k-1}\\
	&<0.
	\end{split}
\end{equation}	
In the case $m_k\in(\frac{1}{2}, 1]$, 
\begin{equation}\label{le2}
\begin{split}
&\frac{1}{2m_k^2}\sss_{k-1}^{\T}\yy_{k-1}+\frac{\frac{2m_k-1}{2m_k}\frac{1}{2m_k^2}(\sss_{k-1}^{\T}\yy_{k-1})^2-\frac{1}{2m_k^2}(\sss_{k-1}^{\T}\yy_{k-1})^{2}}{\sqrt{(\frac{2m_k-1}{2m_k})^2(\sss_{k-1}^{\T}\yy_{k-1})^2}}\\
&=\frac{1}{2m_k^2}\sss_{k-1}^{\T}\yy_{k-1}+\frac{1}{2m_k^2}\sss_{k-1}^{\T}\yy_{k-1}-\frac{1}{2m_k^2}\frac{2m_k}{2m_k-1}\sss_{k-1}^{\T}\yy_{k-1}\\
&=\frac{m_{k}-1}{m_k^{2}(2m_k-1)}\sss_{k-1}^{\T}\yy_{k-1}\\
&\le 0.
\end{split}
\end{equation}		
Combining \eqref{VBB'}, \eqref{le1}, and \eqref{le2}, we know that $\alpha_{k}^{PBB}$ is monotonically decreasing with respect to $m_k$. Direct calculation shows that $\alpha_{k}^{BB1}$, $\sqrt{\alpha_{k}^{BB1}\alpha_{k}^{BB2}}$, and $\alpha_{k}^{BB2}$ correspond to $m_k=1$, $\frac{1}{2}$, and $0$. We complete the proof.
\end{proof}
From \eqref{quadratic equ}, we consider a quadratic function 
\begin{equation*}
	\phi(\alpha)=m_k\sss_{k-1}^{\T}\sss_{k-1}\alpha^{2}-(2m_k-1)\sss_{k-1}^{\T}\yy_{k-1}\alpha+(m_k-1)\yy_{k-1}^{\T}\yy_{k-1}
\end{equation*}
with $\sss_{k-1}^{\T}\sss_{k-1}=2$, $\sss_{k-1}^{\T}\yy_{k-1}=3$, and $\yy_{k-1}^{\T}\yy_{k-1}=9$. Figure \ref{fig:roots} plots the graph of $\phi(\alpha)$ for various $m_k\in[0, 1]$. 
\begin{figure}[h!]
	\centering
	\subfigure{\includegraphics[width=0.8\linewidth]{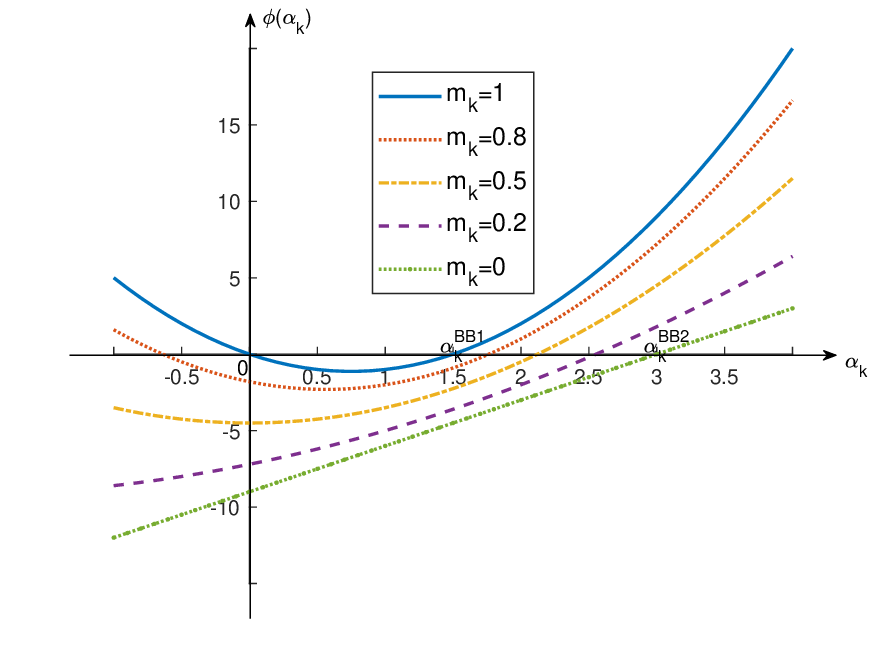}}
	\caption{\textit{The horizontal axis is $\alpha_{k}$, and the vertical axis is the value of $\phi(\alpha_{k})$.}}
	\label{fig:roots}
\end{figure}
We can clearly see that as $m_k$ decreases from $1$ to $0$, the positive zero point of $\phi(\alpha)$ gradually increases from $\alpha_{k}^{BB1}$ to $\alpha_{k}^{BB2}$. When $m_k=0$, $\phi(\alpha)$ is a linear function with zero at $\alpha_{k}^{BB2}$, as shown in the figure. This example graphically describes the effect of $\alpha_{k}^{PBB}$ interpolating between two classical $\alpha_{k}^{BB1}$ and $\alpha_{k}^{BB2}$ via the interpolation parameter $m_{k}$.
\section{Convergence analysis}\label{sec:convergence}
In this section, we analyze the convergence of the PBB method for strictly convex quadratic minimization problems. 

Due to the results in \cite{Forsythe1968asymptoticdirectionsthes,Yuan2006newstepsizesteepest}, the behavior of gradient descent method for higher dimensional problems are essential the same as it for two dimensional cases. Therefore, we consider minimizing a two dimensional quadratic convex function as follows
\begin{equation}\label{quadratic}
\min_{\x\in\mathbb{R}^{2}} f(\x)=\frac{1}{2}\x^{\T}A\x-\mathbf{b}^{\T}\x,
\end{equation} 
where $\mathbf{b}\in\mathbb{R}^{2}$ and   
\begin{equation*}
A=\begin{bmatrix}
\lambda & 0\\
0 & 1
\end{bmatrix}
\end{equation*}
with $\lambda>1$. We denote $\g_{k-1}=(\g_{k-1}^{(1)}, \ \g_{k-1}^{(2)})^{\T}$ with $\g_{k-1}^{(i)}\neq 0$ for $i=1,2$. In this case, from the result in Theorem \ref{theorem1}, we have
\begin{equation}\label{equ:inequality}
1\le\alpha_{k}^{BB1}\le\alpha_{k}^{PBB}\le\alpha_{k}^{BB2}\le\lambda.
\end{equation} 
From \eqref{quadratic}, we have
\begin{equation}\label{gkk}
\g_{k+1}=A\x_{k+1}-\mathbf{b}.
\end{equation}
It follows from \eqref{gradient method} and \eqref{gkk} that
\begin{equation}\label{equ:gkkk}
\g_{k+1}=\big[I-(\alpha_{k}^{PBB})^{-1}A\big]\g_{k}.
\end{equation}
From \eqref{equ:inequality} and \eqref{equ:gkkk}, we know that 
\begin{equation}\label{equ:egk}
(\g_{k+1}^{(2)})^2=\Big(\frac{\alpha_{k}^{PBB}-1}{\alpha_{k}^{PBB}}\Big)^2(\g_{k}^{(2)})^2,
\end{equation}
and
\begin{equation}\label{equ:egk2}
(\g_{k+1}^{(1)})^2=\Big(\frac{\alpha_{k}^{PBB}-\lambda}{\alpha_{k}^{PBB}}\Big)^2(\g_{k}^{(1)})^2.
\end{equation}
It follows from \eqref{equ:inequality} and \eqref{equ:egk} that $|\g_{k}^{(2)}|$ is monotonically decreasing. We now focus on analyzing the change of $|\g_{k}^{(1)}|$ because it may show a non-monotonic phenomenon. Let
\begin{equation}\label{equ:ek}
\epsilon_{k}=\frac{(\g_{k-1}^{(1)})^{2}}{(\g_{k-1}^{(2)})^{2}}>0.
\end{equation}   
Since $\sss_{k-1}=\x_{k}-\x_{k-1}=-(\alpha_{k-1}^{PBB})^{-1}\g_{k-1}$ and $\yy_{k-1}=\g_{k}-\g_{k-1}=-(\alpha_{k-1}^{PBB})^{-1}A\g_{k-1}$, according the definition of $\alpha_{k}^{PBB}$ in  \eqref{VBB}, we have 
\begin{align}\label{equ:nRBB}
\begin{split}
\alpha_{k}^{PBB}=\frac{(2m_{k}-1)(\lambda\epsilon_{k}+1)+\sqrt{(\lambda\epsilon_{k}+1)^2+4m_{k}(1-m_{k})(\lambda-1)^2\epsilon_{k}}}{2m_{k}(\epsilon_{k}+1)}
\end{split}
\end{align}
with $m_{k}\in(0,1]$.

From \eqref{equ:egk}, \eqref{equ:ek}, and given that
\begin{equation}\label{equ:gknorm}
\|\g_{k+1}\|_{2}^{2}=(\g_{k+1}^{(1)})^2(1+\epsilon_{k+2}),	
\end{equation}  
we know that if $\epsilon_{k+1}\le1$, then $\|\g_{k+1}\|_{2}<\|\g_{k}\|_{2}$; otherwise, $\|\g_{k+1}\|_{2}$ may be greater than $\|\g_{k}\|_{2}$. With this clue, we focus our analysis on the variation pattern of $\epsilon_{k}$.
It follows from \eqref{equ:egk}, \eqref{equ:egk2}, \eqref{equ:ek}, and \eqref{equ:nRBB} that 
\begin{equation}\label{equ:dynamic}
\epsilon_{k+2}=e(\epsilon_{k})^2\epsilon_{k+1},
\end{equation}
where
\begin{equation}\label{equ:eek}
e(\epsilon_{k})=\frac{(2m_{k}-1)(\lambda\epsilon_{k}+1)+\sqrt{(\lambda\epsilon_{k}+1)^2+4m_{k}(1-m_{k})(\lambda-1)^2\epsilon_{k}}-2m_{k}\lambda(\epsilon_{k}+1)}{(2m_{k}-1)(\lambda\epsilon_{k}+1)+\sqrt{(\lambda\epsilon_{k}+1)^2+4m_{k}(1-m_{k})(\lambda-1)^2\epsilon_{k}}-2m_{k}(\epsilon_{k}+1)}.
\end{equation}
\begin{proposition}\label{prop1}
	In \eqref{equ:eek}, the denominator of $e(\epsilon_{k})$ is greater than $0$.
\end{proposition}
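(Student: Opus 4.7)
The plan is to rewrite the denominator $D$ of $e(\epsilon_k)$ in closed form by matching it against the explicit formula \eqref{equ:nRBB} for $\alpha_k^{PBB}$. Reading \eqref{equ:nRBB} off, the bracket
$$(2m_k-1)(\lambda\epsilon_k+1)+\sqrt{(\lambda\epsilon_k+1)^2+4m_k(1-m_k)(\lambda-1)^2\epsilon_k}$$
is precisely $2m_k(\epsilon_k+1)\alpha_k^{PBB}$. Subtracting the remaining term $2m_k(\epsilon_k+1)$ that appears in $D$ gives the clean factorization
$$D = 2m_k(\epsilon_k+1)\bigl(\alpha_k^{PBB}-1\bigr).$$
So the whole statement reduces to showing $\alpha_k^{PBB}>1$, since $m_k>0$ and $\epsilon_k+1>0$ are immediate from the hypotheses.

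To get $\alpha_k^{PBB}>1$, I would invoke Theorem \ref{theorem1} (which gives $\alpha_k^{PBB}\ge \alpha_k^{BB1}$) and then evaluate $\alpha_k^{BB1}$ in the present two-dimensional setting. Using $\sss_{k-1}=-(\alpha_{k-1}^{PBB})^{-1}\g_{k-1}$ and $\yy_{k-1}=A\sss_{k-1}$, the $(\alpha_{k-1}^{PBB})^{-2}$ factors cancel in the Rayleigh quotient and one gets
$$\alpha_k^{BB1}=\frac{\g_{k-1}^{\T}A\g_{k-1}}{\g_{k-1}^{\T}\g_{k-1}}=\frac{\lambda\epsilon_k+1}{\epsilon_k+1}.$$
Because $\lambda>1$ and $\epsilon_k>0$ (recall $\g_{k-1}^{(i)}\ne 0$), this gives $\alpha_k^{BB1}>1$ strictly, hence $\alpha_k^{PBB}>1$ and therefore $D>0$.

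There is no real obstacle here: the only insight needed is the factorization of $D$ in terms of $\alpha_k^{PBB}$, after which the positivity follows from Theorem \ref{theorem1} together with the elementary fact that the Rayleigh quotient of a gradient with nonzero components against $\mathrm{diag}(\lambda,1)$ exceeds the smallest eigenvalue $1$ strictly when $\lambda>1$.
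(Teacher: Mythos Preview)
Your proof is correct but follows a different route from the paper. The paper proceeds by a single direct inequality: it bounds the square root from below via
\[
\sqrt{(\lambda\epsilon_k+1)^2+4m_k(1-m_k)(\lambda-1)^2\epsilon_k}\ \ge\ \lambda\epsilon_k+1,
\]
which is immediate since $4m_k(1-m_k)(\lambda-1)^2\epsilon_k\ge 0$ for $m_k\in(0,1]$. Substituting this into the denominator and simplifying yields $D\ge 2m_k(\lambda-1)\epsilon_k>0$.

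Your argument is more structural: you recognize from \eqref{equ:nRBB} that the first two terms of $D$ equal $2m_k(\epsilon_k+1)\alpha_k^{PBB}$, giving the exact factorization $D=2m_k(\epsilon_k+1)(\alpha_k^{PBB}-1)$, and then invoke Theorem~\ref{theorem1} plus the explicit Rayleigh quotient $\alpha_k^{BB1}=(\lambda\epsilon_k+1)/(\epsilon_k+1)>1$ to conclude. In fact your route recovers the same lower bound as the paper once you plug in $\alpha_k^{PBB}\ge\alpha_k^{BB1}$. The paper's approach is self-contained and purely algebraic; yours is slightly longer but explains \emph{why} $D>0$ in spectral terms---it is exactly the statement that $\alpha_k^{PBB}$ strictly exceeds the smallest eigenvalue, which ties the proposition back to \eqref{equ:inequality} and makes the analogous bound for the numerator (via $\alpha_k^{PBB}<\lambda$) transparent as well.
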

\begin{proof}
	Because $m_{k}\in(0,1]$, then we have
	\begin{equation}
	\begin{split}
	&(2m_{k}-1)(\lambda\epsilon_{k}+1)+\sqrt{(\lambda\epsilon_{k}+1)^2+4m_{k}(1-m_{k})(\lambda-1)^2\epsilon_{k}}-2m_{k}(\epsilon_{k}+1)\\
	&\ge(2m_{k}-1)(\lambda\epsilon_{k}+1)+(\lambda\epsilon_{k}+1)-2m_{k}(\epsilon_{k}+1)\\
	&=2m_{k}(\lambda-1)\epsilon_{k}\\
	&>0.
	\end{split}	
	\end{equation}
	We complete the proof. 
\end{proof}
We always assume that $\varepsilon_{k+1}>1$, because if this assumption is not true all the time, then $\|\g_{k+1}\|_{2}\le\|\g_{k}\|_{2}$ for all $k\ge1$, indicating that $\|\g_{k}\|_{2}$ is monotonically decreasing.
\begin{lemma}\label{lemma:lem1}
	Assume that $\epsilon_{k+1}>1$. Then there exists an integer $N\ge1$ such that
	\begin{equation}
	\epsilon_{k+N+1}\le 1.
	\end{equation} 
\end{lemma}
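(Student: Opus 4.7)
The plan is to derive a sharp pointwise upper bound on $|e(\epsilon_k)|$ using Theorem \ref{theorem1}, and then apply the recursion $\epsilon_{k+2} = e(\epsilon_k)^2 \epsilon_{k+1}$ at most twice to force $\epsilon$ strictly below $1$. Crucially, this avoids wrestling with the square-root formula \eqref{VBB} directly; the monotonicity result already does the work.

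First, I would convert Theorem \ref{theorem1} into an explicit bound on $|e(\epsilon_k)|$. Substituting $\sss_{k-1} = -(\alpha_{k-1}^{PBB})^{-1}\g_{k-1}$ and $\yy_{k-1} = -(\alpha_{k-1}^{PBB})^{-1}A\g_{k-1}$ into the definition of $\alpha_k^{BB1}$ yields $\alpha_k^{BB1} = (\lambda \epsilon_k + 1)/(\epsilon_k + 1)$, and a direct computation gives $(\lambda - \alpha_k^{BB1})/(\alpha_k^{BB1} - 1) = 1/\epsilon_k$. Since the map $x \mapsto (\lambda - x)/(x - 1)$ is monotonically decreasing on $(1, \lambda)$ and Theorem \ref{theorem1} gives $\alpha_k^{PBB} \ge \alpha_k^{BB1} > 1$, one obtains
\[
|e(\epsilon_k)| \;=\; \frac{\lambda - \alpha_k^{PBB}}{\alpha_k^{PBB} - 1} \;\le\; \frac{1}{\epsilon_k}
\]
uniformly in $m_k \in (0, 1]$.

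Second, I would feed this bound into the recursion \eqref{equ:dynamic}. The hypothesis $\epsilon_{k+1} > 1$ gives $e(\epsilon_{k+1})^2 \le 1/\epsilon_{k+1}^2 < 1$, so
\[
\epsilon_{k+3} \;=\; e(\epsilon_{k+1})^2 \epsilon_{k+2} \;\le\; \epsilon_{k+2}/\epsilon_{k+1}^2.
\]
If $\epsilon_{k+2} \le \epsilon_{k+1}^2$ then $\epsilon_{k+3} \le 1$ and the conclusion holds with $N = 2$. Otherwise $\epsilon_{k+2} > \epsilon_{k+1}^2 > 1$, so the same bound is available at index $k+2$, and a second application of the recursion gives
\[
\epsilon_{k+4} \;=\; e(\epsilon_{k+2})^2 \epsilon_{k+3} \;\le\; \frac{\epsilon_{k+3}}{\epsilon_{k+2}^2} \;\le\; \frac{1}{\epsilon_{k+1}^2\,\epsilon_{k+2}} \;<\; 1,
\]
yielding the conclusion with $N = 3$.

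The main obstacle is producing a bound on $|e(\epsilon_k)|$ that depends only on $\epsilon_k$ and is uniform in $m_k$; Theorem \ref{theorem1} makes this painless by pushing everything onto the closed-form expression for $\alpha_k^{BB1}$. A secondary subtlety is that the bound $|e(\epsilon_k)| \le 1/\epsilon_k$ becomes uninformative when $\epsilon_k \le 1$, which is exactly why the argument may need to look two recursion steps ahead (first testing $\epsilon_{k+3}$, then if necessary $\epsilon_{k+4}$) rather than conclude in a single recursion step.
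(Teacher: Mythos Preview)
Your proof is correct and takes a genuinely different route from the paper's. The paper argues by contradiction: assuming $\epsilon_{k+N+1}>1$ for all $N\ge 1$, it derives from the recursion that $e(\epsilon_{k+N+1})^2>1/\epsilon_{k+N+2}$, then solves this as a quadratic inequality in the interpolation parameter to find the admissible set of $m$-values, and argues that this admissible set need not contain all of $(0,1]$. Your approach instead extracts the uniform pointwise bound $|e(\epsilon_j)|\le 1/\epsilon_j$ directly from Theorem~\ref{theorem1} (via the closed form $\alpha_j^{BB1}=(\lambda\epsilon_j+1)/(\epsilon_j+1)$ and the monotonicity of $x\mapsto(\lambda-x)/(x-1)$), and then iterates the recursion at most twice. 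What your route buys is an explicit, quantitative bound $N\le 3$ that holds uniformly over every choice of parameters $m_j\in(0,1]$, whereas the paper's contradiction argument does not yield any concrete bound on $N$; your argument is also shorter and avoids manipulating the square-root formula \eqref{equ:eek} altogether. The paper's approach, on the other hand, makes the dependence on $m_k$ more visible and motivates the discussion in the subsequent Remark about when the contraction step ``fails''. One minor note: your case split omits the trivial case $\epsilon_{k+2}\le 1$ (giving $N=1$), but this is subsumed by your first case since $\epsilon_{k+1}^2>1$.
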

\begin{proof}
	Assume, for the purpose of deriving a contradiction, that for all $N\ge1$,
	\begin{equation}\label{equ:assume}
	\epsilon_{k+N+1}>1.
	\end{equation}
	We denote
	\begin{equation}\label{equ:delta}
		\delta=\frac{1}{\epsilon_{k+N+2}}\in(0,1).
	\end{equation} 
	According to \eqref{equ:dynamic} and \eqref{equ:assume}, we have
	\begin{align}\label{equ:lizi}
	\begin{split}
	0<\epsilon_{k+N+3}-1&=e(\epsilon_{k+N+1})^2\epsilon_{k+N+2}-1\\
	&=\big[e(\epsilon_{k+N+1})^2-\delta\big]\epsilon_{k+N+2}.
	\end{split}
	\end{align}  
	It follows from \eqref{equ:assume} and \eqref{equ:lizi} that 
	\begin{equation}\label{equ:lizi2}
	e(\epsilon_{k+N+1})^2>\delta.
	\end{equation}
	From \eqref{equ:eek} and Proposition \ref{prop1}, we know that the solution set of inequality \eqref{equ:lizi2} about $m_{k}$ is
	\begin{equation}\label{equ:beta}
		m_{k}<-\frac{(1-\sqrt{\delta})(1+\sqrt{\delta}\lambda\epsilon_{k+N+1})}{(\lambda-1)(1+\delta\epsilon_{k+N+1})},
	\end{equation}
or
	\begin{equation}\label{equ:eta}
		m_{k}>\frac{(1+\sqrt{\delta})(\lambda\sqrt{\delta}\epsilon_{k+N+1}-1)}{(1+\delta\epsilon_{k+N+1})(\lambda-1)}.
	\end{equation}
The right-hand side of \eqref{equ:beta} is negative, which contradicts $m_{k}\in(0,1]$. Thus, we only consider the case \eqref{equ:eta}. Setting 
\begin{equation}\label{equ:c}
	c=\sqrt{\delta}\epsilon_{k+N+1},
\end{equation}
if inequality \eqref{equ:eta} holds universally, then it implies that $c\in(0,\frac{1}{\lambda})$. Nevertheless, it follows from \eqref{equ:assume} and \eqref{equ:delta} that $c\in(0,\frac{1}{\lambda})$ cannot be guaranteed, indicating that assumption \eqref{equ:assume} does not hold. We complete the proof.	
	
\end{proof}
\begin{remark}
In the proof of Lemma \ref{lemma:lem1}, if $\lambda$ is a large real number, then the right-hand side of \eqref{equ:eta} is dominated by $c$. Therefore, if $c>1$, then the inequality in \eqref{equ:eta} is likely to be invalid, that is, \eqref{equ:lizi2} has no solution. This means that in three consecutive iterations, at least one iteration step is decreasing. It is worth noting that \eqref{equ:dynamic} is a second-order nonlinear difference dynamical system about $\epsilon_{k}$, and its analytical expression is extremely complicated. A special case of \eqref{equ:dynamic} is: $\epsilon_{k+2}=\frac{\epsilon_{k+1}}{(\epsilon_{k})^2}$, which corresponds to the case of $m_{k}=1$, that is, the second-order differential dynamical system in the BB1 method. The analytical solution of this special dynamical system can be expressed explicitly, and the result can be seen in \cite{Barzilai1988TwoPointStep}. Based on this perspective, for analyzing the convergence of gradient descent methods and even designing effective spectral gradient step sizes, we should consider certain properties of the corresponding second-order difference dynamical system. This approach may open up new avenues for theoretical analysis of optimization algorithms.
\end{remark}
\begin{theorem}
	Let $f(\x)$ be the strictly convex quadratic function \eqref{quadratic}. Let $\{\x_{k}\}$ be the sequence generated by the PBB method. Then, either $\g_{k}=\mathbf{0}$ for some finite $k$, or the sequence $\{\|\g_{k}\|_{2}\}$ converges to zero at least $R$-linearly.
\end{theorem}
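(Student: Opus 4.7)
The plan is to decompose $\g_k$ along the eigendirections of $A$ and track the two components separately, relying on the a priori bound $1 \leq \alpha_k^{PBB} \leq \lambda$ established in \eqref{equ:inequality}. Since $\|\g_k\|_2^2 = (1+\epsilon_{k+1})(\g_k^{(2)})^2$ by \eqref{equ:ek}, the $R$-linear convergence of $\|\g_k\|_2$ will follow from two ingredients: a uniform geometric decay rate for the second component $|\g_k^{(2)}|$, and a uniform upper bound on the ratio $\epsilon_{k+1}$.

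For the first ingredient, \eqref{equ:egk} combined with \eqref{equ:inequality} yields
\[
|\g_{k+1}^{(2)}| = \frac{\alpha_k^{PBB}-1}{\alpha_k^{PBB}}\,|\g_k^{(2)}| \leq \frac{\lambda-1}{\lambda}\,|\g_k^{(2)}|,
\]
so $|\g_k^{(2)}| \leq q^k |\g_0^{(2)}|$ with $q := (\lambda-1)/\lambda \in (0,1)$. This also takes care of the dichotomy in the theorem: if $\g_k=\mathbf{0}$ at some finite step we are already done, and otherwise the second component decays $Q$-linearly at rate $q$.

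For the second ingredient, Lemma \ref{lemma:lem1} implies that whenever $\epsilon_{k+1} > 1$ the ratio is driven back to $\epsilon\leq 1$ within finitely many steps, so that the "good" set $J := \{k : \epsilon_{k+1} \leq 1\}$ is infinite. At any $k \in J$ one has $(\g_k^{(1)})^2 \leq (\g_k^{(2)})^2$, whence $\|\g_k\|_2 \leq \sqrt{2}\,q^k|\g_0^{(2)}|$, giving a geometric envelope along the subsequence $J$. To promote this to every index, I would extend the envelope through each excursion between consecutive good indices: from \eqref{equ:egk2} together with $\alpha_k^{PBB}\geq 1$ we have the crude one-step bound $|\g_{k+1}^{(1)}| \leq (\lambda-1)|\g_k^{(1)}|$, while an asymptotic expansion of \eqref{equ:eek} shows that $e(\epsilon_k)^2 = O(1/\epsilon_k^2)$ as $\epsilon_k \to \infty$, because the leading $O(\epsilon_k)$ terms in the numerator of $e$ cancel while the denominator grows linearly in $\epsilon_k$. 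Plugged into the nonlinear recursion \eqref{equ:dynamic}, this self-limiting property prevents $\epsilon_k$ from escaping to infinity and yields a uniform constant $M$ with $\epsilon_{k+1} \leq M$ for all $k$. Combined with the first ingredient, this gives $\|\g_k\|_2 \leq \sqrt{1+M}\,q^k|\g_0^{(2)}|$ for every $k$, which is exactly the desired $R$-linear rate.

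The main obstacle is precisely this promotion from a subsequence envelope along $J$ to a uniform-in-$k$ bound, since Lemma \ref{lemma:lem1} only asserts the existence of a finite return time and not a uniform bound on it. The technical engine that closes the gap is the asymptotic contraction of the map $e(\cdot)$ in \eqref{equ:eek}: it rules out prolonged excursions of $\epsilon_k$ away from $[0,1]$ and thereby converts finite-but-unbounded return times into a bounded $\epsilon$-trajectory. Once that quantitative control is in place, everything else reduces to the single-step identities \eqref{equ:egk}--\eqref{equ:egk2} together with the step-size bound \eqref{equ:inequality}.
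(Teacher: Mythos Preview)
Your overall strategy—decomposing $\g_k$ along the eigendirections, using \eqref{equ:inequality} to obtain $Q$-linear decay of $|\g_k^{(2)}|$, and invoking Lemma~\ref{lemma:lem1} to control the ratio $\epsilon_{k+1}$—is exactly the paper's. You also correctly isolate the real difficulty: passing from a bound along the good subsequence $J=\{k:\epsilon_{k+1}\le 1\}$ to a bound valid at every index.

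The gap is your proposed fix. The claim that $e(\epsilon)^2=O(1/\epsilon^2)$ forces a uniform bound $\epsilon_{k+1}\le M$ is false. Take $m_k\equiv 1$ (BB1), which is an admissible choice in the PBB family. Then $e(\epsilon)=-1/\epsilon$ exactly, and the recursion \eqref{equ:dynamic} becomes $\epsilon_{k+2}=\epsilon_{k+1}/\epsilon_k^{2}$. With $a_k=\log\epsilon_k$ this is the linear recurrence $a_{k+2}=a_{k+1}-2a_k$, whose characteristic roots $(1\pm i\sqrt{7})/2$ have modulus $\sqrt{2}>1$. Hence $|a_k|$ grows like $2^{k/2}$ and $\epsilon_k$ oscillates with amplitude tending to infinity; no constant $M$ can bound it. The ``self-limiting'' behaviour you cite only says that a large $\epsilon_k$ makes $\epsilon_{k+2}$ small \emph{relative to} $\epsilon_{k+1}$; it does not prevent the pair $(\epsilon_k,\epsilon_{k+1})$ from drifting off together.

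The paper never attempts to bound $\epsilon_k$. Instead, it compares norms only at the two ends of an excursion: starting from any $k$ with $\epsilon_{k+1}>1$ and letting $N$ be the return time from Lemma~\ref{lemma:lem1}, it writes $\|\g_j\|_2^2=(1+\epsilon_{j+1})(\g_j^{(2)})^2$ at $j=k$ and $j=k+N$, uses the product contraction $(\g_{k+N}^{(2)})^2=\prod_{j=0}^{N-1}\xi_{k+j}^{2}\,(\g_k^{(2)})^2$ of the second component, and then simply observes that $(1+\epsilon_{k+N+1})/(1+\epsilon_{k+1})<1$ since $\epsilon_{k+N+1}\le 1<\epsilon_{k+1}$. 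This yields $\|\g_{k+N}\|_2^2<\prod\xi_{k+j}^{2}\,\|\g_k\|_2^2$ with each $\xi_{k+j}\in(0,1)$, a contraction between consecutive returns that requires no information about $\epsilon$ at intermediate indices. That is the device you are missing.
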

\begin{proof}
	We only need to consider the case that $\g_{k}\neq\mathbf{0}$ for all $k$. We assume that $\epsilon_{k+1}>1$ holds for some $k>1$. From \eqref{equ:egk}, \eqref{equ:gknorm}, and Lemma \ref{lemma:lem1}, we know that there exists an integer $N\ge1$ such that  
	\begin{align*}
	\begin{split}
	\|\g_{k+N}\|_{2}^{2}&=(\g_{k+N}^{(2)})^2(1+\epsilon_{k+N+1})\\
	&=\prod_{j=0}^{N-1}(\xi_{k+j})^2(\g_{k}^{(2)})^2(1+\epsilon_{k+N+1})\\
	&=\prod_{j=0}^{N-1}(\xi_{k+j})^2\frac{1+\epsilon_{k+N+1}}{1+\epsilon_{k+1}}\|\g_{k}\|_{2}^{2}\\
	&<\prod_{j=0}^{N-1}(\xi_{k+j})^2\|\g_{k}\|_{2}^{2},
	\end{split}
	\end{align*}
	where each $\xi_{k+j}=\Big(\frac{\alpha_{k}^{PBB}-1}{\alpha_{k}^{PBB}}\Big)^2\in(0,1)$. Therefore, the sequence $\{\g_{k}\}$ converges to zero $R$-linearly. This completes our proof.	
\end{proof}
\begin{remark}
	For the $n$ dimensional case, we can consider
	\begin{equation*}
	A=\begin{bmatrix}
	A_{1} & \mathbf{0}\\
	\mathbf{0} & 1
	\end{bmatrix},
	\end{equation*}
	where the sub-matrix 
	$$A_{1}=\text{diag}(\lambda_{1},\lambda_{2},\ldots,\lambda_{n-1}),$$
	where $\lambda=\lambda_{1}\ge \lambda_{2}\ge\ldots\ge \lambda_{n-1}>1$, and denote $\g_{k-1}=\big((\g_{k-1}^{(1)})^{\T}, \ \g_{k-1}^{(2)}\big)^{\T}$ with $\g_{k-1}^{(1)}\in\mathbb{R}^{n-1}(\neq\mathbf{0})$,  $\g_{k-1}^{(2)}\neq 0$, and let
	\begin{equation}\label{equ:nepsilon}
	\epsilon_{k}=\frac{\|\g_{k-1}^{(1)}\|_{2}^{2}}{(\g_{k-1}^{(2)})^{2}}.
	\end{equation}
	We can derive the conclusion of Lemma \ref{lemma:lem1}, thereby proving the $R$-linear convergence of the PBB method. For the problem \eqref{quadratic}, we know that  $\yy_{k-1}=A\sss_{k-1}$ and  $\g_{k-1}^{\T}\g_{k-1}\g_{k-1}^{\T}A^2\g_{k-1}\ge(\g_{k-1}^{\T}A\g_{k-1})^2$ hold. Then we have	
	\begin{equation}
	\begin{split}
	\alpha_{k}^{PBB}&=\frac{(2m_k-1)\sss_{k-1}^{\T}\yy_{k-1}+\sqrt{\big((2m_k-1)\sss_{k-1}^{\T}\yy_{k-1}\big)^{2}-4m_k(m_k-1)\sss_{k-1}^{\T}\sss_{k-1}\yy_{k-1}^{\T}\yy_{k-1}}}{2m_k\sss_{k-1}^{\T}\sss_{k-1}}\\
	&\ge \frac{\frac{2m_k-1}{2m_{k}}\g_{k-1}^{\T}A\g_{k-1}+\sqrt{\big(\frac{2m_k-1}{2m_{k}}\g_{k-1}^{\T}A\g_{k-1}\big)^{2}+\frac{1-m_{k}}{m_{k}}(\g_{k-1}^{\T}A\g_{k-1})^2}}{\g_{k-1}^{\T}\g_{k-1}}\\
	&=\frac{\g_{k-1}^{\T}A\g_{k-1}}{\g_{k-1}^{\T}\g_{k-1}}.
	\end{split}
	\end{equation} According to Property B in \cite{Li2023NoteRLinear}, readers may also note that the PBB method exhibits $R$-linear convergence in the $n$ dimensional case.
\end{remark}

\section{An analysis of the ABB method}\label{sec:ABB}
In this section, the principle of adaptive alternating BB step sizes is analyzed to provide insights for the subsequent selection of the interpolation parameter $m_{k}$. From the result of Theorem \ref{theorem1}, we know that $\alpha_{k}^{PBB}\rightarrow\alpha_{k}^{BB1}$ as $m_{k}\rightarrow1$ and $\alpha_{k}^{PBB}\rightarrow\alpha_{k}^{BB2}$ as $m_{k}\rightarrow0$. By selecting the parameter $m_{k}$ from $[0,1]$, $\alpha_{k}^{PBB}$ interpolates between  $\alpha_{k}^{BB1}$ and $\alpha_{k}^{BB2}$. If $m_{k}\in\{0,1\}$, then $\alpha_{k}^{PBB}$ corresponds to $\alpha_{k}^{ABB}$ \cite{Zhou2006GradientMethodsAdaptive}. In this sense, the ABB method can be regarded as a special case of the PBB method. 

The ABB method adaptively selects either the BB1 or BB2 step size via the following threshold-based strategy 
\begin{equation}\label{ABB}
\alpha_{k}^{ABB}=\begin{cases}
\alpha_{k}^{BB2},\quad \text{if}\quad  \alpha_{k}^{BB1}/\alpha_{k}^{BB2}<\eta,\\
\alpha_{k}^{BB1},\quad \text{otherwise},
\end{cases}
\end{equation}
where $\eta\in(0,1)$. From \eqref{equ:cosk}, we know that  $$\cos^{2}\theta_{k}=\alpha_{k}^{BB1}/\alpha_{k}^{BB2}.$$ The idea of the alternating strategy in  \eqref{ABB} is to take a long BB1 step size when $\cos^{2}\theta_{k}\rightarrow1$ and a short BB2 step size when $\cos^{2}\theta_{k}\rightarrow0$. In the gradient descent method, the most essential issue is the process of eliminating the gradient component. Based on this point of view, we present an analysis of the \text{ABB} method. 

We continue to consider the two dimensional problem \eqref{quadratic} while adhering to the settings established in Section \ref{sec:convergence}. Then we have 
\begin{equation}\label{ratio}
\alpha_{k}^{BB1}/\alpha_{k}^{BB2}=\frac{(\lambda\epsilon_{k}+1)^2}{(\epsilon_{k}+1)(\lambda^{2}\epsilon_{k}+1)}.
\end{equation} 
If $\alpha_{k}^{BB1}/\alpha_{k}^{BB2}<\eta$ holds, then we have
\begin{equation}\label{epsilon}
(\lambda\epsilon_{k}+1)^2<\eta(\epsilon_{k}+1)(\lambda^{2}\epsilon_{k}+1).
\end{equation}
After simplifying \eqref{epsilon}, we obtain the following quadratic inequality in $\epsilon_{k}$ 
\begin{equation}\label{epsilonfunc}
\Phi(\epsilon_{k}):=\lambda^{2}(1-\eta)\epsilon_{k}^{2}+\big[2\lambda-\eta(1+\lambda_{k}^{2})\big]\epsilon_{k}+1-\eta<0.
\end{equation} 
If 
\begin{equation}\label{condition}
\eta>\frac{4\lambda}{(1+\lambda)^{2}},
\end{equation}
then the equation $\Phi(\epsilon_{k})=0$  has two positive real roots $\epsilon_{k}^{1}$ and $\epsilon_{k}^{2}$. Let $\epsilon_{k}^{1}<\epsilon_{k}^{2}$. Then  
\begin{equation*}\label{solutions}
\epsilon_{k}^{1}=\frac{\eta(1+\lambda^{2})-2\lambda-(\lambda^2-1)\sqrt{\eta\big(\eta-\frac{4\lambda}{(1+\lambda)^2}\big)}}{2\lambda^{2}(1-\eta)},\quad \epsilon_{k}^{2}=\frac{\eta(1+\lambda^{2})-2\lambda+(\lambda^2-1)\sqrt{\eta\big(\eta-\frac{4\lambda}{(1+\lambda)^2}\big)}}{2\lambda^{2}(1-\eta)}.
\end{equation*}
Therefore, if $\epsilon_{k}\in(\epsilon_{k}^{1}, \epsilon_{k}^{2})$, then we know that  \eqref{epsilonfunc} holds. Let $\lambda\gg1$. If $\frac{4\lambda}{(1+\lambda)^2}\to 0$, then we have the following properties (due to $\epsilon_{k}^{1}\epsilon_{k}^{2}=\frac{1}{\lambda^2}$) 
\begin{equation}\label{limita1a2}
\epsilon_{k}^{1}\to \frac{1-\eta}{\eta\lambda^2},\quad \epsilon_{k}^{2}\to\frac{\eta}{1-\eta}.
\end{equation}
If we further restrict $\eta\in(0,0.5]$ as suggested in \cite{Zhou2006GradientMethodsAdaptive}, then $\frac{\eta}{1-\eta}\in(0,1]$. It follows from the results in \eqref{limita1a2} that 
\begin{equation}\label{results}
\begin{split} &{\lim_{\epsilon_{k}\to\epsilon_{k}^{2}}\alpha_{k}^{BB1}=\frac{\lambda\frac{\eta}{1-\eta}+1}{\frac{\eta}{1-\eta}+1}=\eta(\lambda-1)+1},\quad{\lim_{\epsilon_{k}\to\epsilon_{k}^{2}}\alpha_{k}^{BB2}=\frac{\lambda^2\frac{\eta}{1-\eta}+1}{\lambda\frac{\eta}{1-\eta}+1}\rightarrow\lambda},\\ &{\lim_{\epsilon_{k}\to\epsilon_{k}^{1}}\alpha_{k}^{BB1}=\frac{\lambda\frac{1-\eta}{\eta\lambda^{2}}+1}{\frac{1-\eta}{\eta\lambda^{2}}+1}\rightarrow 1},\quad{\lim_{\epsilon_{k}\to\epsilon_{k}^{1}}\alpha_{k}^{BB2}=\frac{\lambda^2\frac{1-\eta}{\eta\lambda^{2}}+1}{\lambda\frac{1-\eta}{\eta\lambda^{2}}+1}\rightarrow\frac{1}{\eta}}<\lambda (\text{due to \eqref{condition}}).
\end{split}
\end{equation}
When $\eta\in(0, 0.5]$,  according to the definition $\epsilon_{k}$ in \eqref{equ:ek} and the results in \eqref{limita1a2} and \eqref{results}, we discuss three cases as follows.
\begin{itemize}
	\item[($\text{\uppercase\expandafter{\romannumeral1}}$)] $\epsilon_{k}\in(\epsilon_{k}^{1},\ \epsilon_{k}^{2})$, i.e., $\alpha_{k}^{BB1}/\alpha_{k}^{BB2}<\eta$. This scenario indicates that, compared to the gradient component $|\g_{k-1}^{(2)}|$, the gradient component $|\g_{k-1}^{(1)}|$ corresponding to the eigenvalue $\lambda$ has not been completely eliminated. Selecting $\alpha_{k}^{BB2}$ is conducive to approximating $\lambda$, thereby achieving the purpose of quickly eliminating $|\g_{k}^{(1)}|$.
	\item[($\text{\uppercase\expandafter{\romannumeral2}}$)] $\epsilon_{k}\in[0,\ \epsilon_{k}^{1}]$. This scenario indicates that $|\g_{k-1}^{(1)}|$ is already very small or close to being completely eliminated. Choosing $\alpha_{k}^{BB1}$ is conducive to approaching eigenvalue $1$ and quickly eliminating $|\g_{k}^{(2)}|$.
	\item[($\text{\uppercase\expandafter{\romannumeral3}}$)] $\epsilon_{k}\in[\epsilon_{k}^{2},\infty)$. This case indicates that the size of  $|\g_{k-1}^{(1)}|$ is equivalent to or exceeds that of $|\g_{k-1}^{(2)}|$. In this case, it is still beneficial to select $\alpha_{k}^{BB2}$.   
\end{itemize}
The motivation behind implementing alternating step sizes, as described in the above three scenarios, lies in enhancing the algorithm's  capability to probe the spectrum of the Hessian matrix. In the ABB method, the alternating threshold $\eta$ is a user-specified constant, typically required to satisfy $\eta\in(0,0.5]$. This approach exhibits two limitations: firstly, we possess no prior knowledge regarding which specific value of $\eta$ would be most appropriate; secondly, as illustrated in the aforementioned three cases, the condition $\eta\in(0,0.5]$ fails to cover the third case. Furthermore, by letting $\lambda\gg 1$, we obtain favorable results in \eqref{results}. Nevertheless, in practical problems, $\lambda$ is finite. Consequently, $\alpha_{k}^{BB2}$ cannot approximate $\lambda$ well, nor can $\alpha_{k}^{BB1}$ effectively approximate $1$. To address these limitations, the ABBmin method  \cite{Frassoldati2008Newadaptivestepsize} employs the maximum value of $\alpha_{k}^{BB2}$ from the recent several iterations to approximate $\lambda$. Additionally, an alternating threshold $\eta\in(0.5,1)$ is set to maximize coverage of Case $\text{\uppercase\expandafter{\romannumeral3}}$. From the analysis of these three cases, it can be seen that the three-term alternating strategy seems to be a more efficient scheme in gradient descent methods. We now aim to partially mitigate these deficiencies of ABB method by adaptively selecting the interpolation parameter $m_{k}$ based on problem characteristics.

\begin{remark}
	In general, condition \eqref{condition} is easily satisfied for $\lambda>1$. Locally, the BB method is dominated by a two dimensional subspace \cite{Huang2022accelerationBarzilaiBorweinmethod}, where the eigenvalues $\lambda$ and $1$ represent the two eigenvalues of the subspace. When the gradient component corresponding to the subspace is eliminated, the BB method is gradually dominated by the next two dimensional subspace.  
\end{remark}

\section{Selecting $m_{k}$ based on \text{ABB} scheme}\label{sec:mk}
Following the philosophy of \text{ABB} method and using the variation information between two consecutive iterations, we first propose an adaptive parameter $\zeta_{k}$ capable of capturing local information of the problem as follows
\begin{equation}\label{zeta}
	\zeta_{k}=\cos^{2}\theta_{k}\frac{\cos^{2}\theta_{k}}{\cos^{2}\theta_{k-1}}\in(0,\infty).
\end{equation}
If $\cos^{2}\theta_{k}\rightarrow1$ and $\cos^{2}\theta_{k-1}\rightarrow0$, then $\frac{\cos^{2}\theta_{k}}{\cos^{2}\theta_{k-1}}\rightarrow\infty$. If $\cos^{2}\theta_{k}\rightarrow0$ and $\cos^{2}\theta_{k-1}\rightarrow1$, then $\frac{\cos^{2}\theta_{k}}{\cos^{2}\theta_{k-1}}\rightarrow0$. When $\cos^2\theta_{k}\rightarrow1$, it implies that the gradient $\g_{k-1}$ approximates an eigenvector of the Hessian matrix, indicating that the current local search direction is favorable and selecting a long step size is recommended; whereas choosing a short step size is reasonable otherwise. Note that  $\frac{\cos^{2}\theta_{k}}{\cos^{2}\theta_{k-1}}$ describes only the relative variation information of the search direction between two adjacent iterations, without using the current size of $\cos^{2}\theta_{k}$, so we multiply $\cos^{2}\theta_{k}$ in front of $\frac{\cos^{2}\theta_{k}}{\cos^{2}\theta_{k-1}}$. Specifically, we discuss the principle of $\zeta_{k}$ from the following three cases:
\begin{enumerate}
	\item[(1)] $\frac{\cos^2\theta_{k}}{\cos^2\theta_{k-1}}<1\Longrightarrow0<\zeta_{k}<1$, 
	\item[(2)] $1<\frac{\cos^2\theta_{k}}{\cos^2\theta_{k-1}}<\frac{1}{\cos^2\theta_{k}}\Longrightarrow0<\zeta_{k}<1$,
	\item[(3)] $\frac{\cos^2\theta_{k}}{\cos^2\theta_{k-1}}>1$ and  $\frac{\cos^2\theta_{k}}{\cos^2\theta_{k-1}}>\frac{1}{\cos^2\theta_{k}}\Longrightarrow\zeta_{k}>1$.
\end{enumerate}
In the first case, $\cos^2\theta_{k}<\cos^2\theta_{k-1}$, so it is reasonable to choose a short step; in the second case, even   $\cos^2\theta_{k}>\cos^2\theta_{k-1}$, but if $\cos^2\theta_{k}$ is very small, resulting in $\frac{\cos^2\theta_{k}}{\cos^2\theta_{k-1}}<\frac{1}{\cos^2\theta_{k}}$, this means that $\g_{k-1}$ deviates from one eigenvector of Hessian, then we have to choose a short step; the last case shows that $\cos^2\theta_{k}$ is large, and a long step should be chosen.  
\begin{remark}
	The parameter scheme \eqref{zeta} originates from the idea of \text{ABB}, but is different from the operation in \text{ABB}. One advantage of scheme \eqref{zeta} is that it is adaptive and does not require user to set the threshold value $\eta$. We can regard $\frac{\cos^2\theta_{k}}{\cos^2\theta_{k-1}}$ and $\frac{1}{\cos^2\theta_{k}}$ in \eqref{zeta} as $\cos^2\theta_{k}$ and $\eta$ in \text{ABB} \eqref{ABB}, respectively.
\end{remark}

Because the parameter $m_{k}\in(0,1]$, we consider the following transformation
\begin{equation}\label{mk}
	m_{k}=\frac{(\zeta_{k})^q}{\alpha_{k}^{BB1}+(\zeta_{k})^q},
\end{equation} 
where $q\ge1$ is an integer and acts as a scaling factor. That is, when $\zeta_{k}\in(0, 1)$, $m_{k}$ will quickly approach $0$ through the action of $q$; conversely, when $\zeta_{k}>1$, $m_{k}$ will quickly approach $1$ through the action of $q$. In the later experimental section, we will test the performance of the PBB method under different $q$. In \eqref{mk}, we employ $\alpha_{k}^{BB1}$ in the denominator to characterize the current curvature of problem. That is, if the  $\alpha_{k}^{BB1}$ is large, then $m_{k}$ tends to approach $0$, favoring $\alpha_{k}^{PBB}$ approximating $\alpha_{k}^{BB2}$. Otherwise, $m_{k}$ tends to approach $1$, thus causing $\alpha_{k}^{PBB}$ to approximate $\alpha_{k}^{BB1}$.


\begin{remark}
	When the ``situation" of the current iterate is ``bad" (search direction deviates from one eigenvector of Hessian or curvature is large), a short step size is generated by using interpolation parameter to improve the performance of algorithm. This strategy, which ``penalizes" the ``bad situation'' through adjustment of the interpolation parameter $m_{k}$, endows the PBB method with certain regularization characteristics.
\end{remark}

\section{\text{Nonquadratic} minimization}\label{sec:nonqua}
To extend \text{PBB} method for minimizing \text{nonquadratic} continuous differentiable functions \eqref{generalqua}, we usually need to incorporate some line search to ensure global convergence towards a local \text{minima}. When $f$ is a generic function, the average Hessian $A_{k}=\int_{0}^{1}\nabla^{2}f(\x_{k-1}+t\sss_{k-1})dt$ satisfies the secant equation $\yy_{k-1}=A_{k}\sss_{k-1}$ (cf. e.g., \cite[Eq.(6.11)]{JorgeNocedal2006NumericalOptimization}). Thus, we can still regard $\alpha_{k}^{BB1}$ and $\alpha_{k}^{BB2}$ as the Rayleigh quotients of $\g_{k-1}$ with respect to $A_{k}$ and $A_{k}^2$, respectively. We note that under the condition that $A_{k}$ is a symmetric positive definite (SPD) matrix, all results in Sects \ref{sec:2}, \ref{sec:convergence}, and \ref{sec:ABB} are still valid for generic functions with replacing $A$ by $A_{k}$.  

Among BB-like methods, the \text{non-monotonic} line search is regarded as an effective strategy \cite{Raydan1997BarzilaiBorweinGradienta}. Here we would like to adopt the \text{Grippo-Lampariello-Lucidi (GLL) non-monotonic} line search \cite{Grippo1986NonmonotoneLineSearch}, which accepts $\gamma_{k}\in(0,1)$ when it satisfies
\begin{equation}\label{non-monotone}
f(\x_{k}+\gamma_{k}\dd_{k})\le\max_{1\le j\le \min\{k,M\}} \{f(\x_{k-j+1})\} +\sigma\gamma_{k}\g_{k}^{\T}\dd_{k},	
\end{equation}  
where $M$ is a nonnegative integer, $\sigma\in(0,\,1)$, and  $\dd_{k}=-\frac{1}{\alpha_{k}}\g_{k}$. The detailed procedure is presented in Algorithm \ref{alg:VBB}. Line $2$ describes a condition for the function value to decrease sufficiently, where the common values of line search parameters $\delta=\frac{1}{2}$, $\sigma=10^{-4}$ (cf. \cite[p.33]{JorgeNocedal2006NumericalOptimization}), and $M=10$ in the experiments. At the start of each internal line search, we take $\gamma_{k}=1$. The critical assumption to prove the global convergence of Algorithm \ref{alg:VBB} is that step size $\frac{1}{\alpha_{k}}$ is uniformly bounded, i.e., $\alpha_{k}\in[\alpha_{\min}, \alpha_{\max}]$ for all $k>1$. Since the step size $\frac{1}{\alpha_{k}^{PBB}}$ \eqref{VBB} with safeguard lies in this interval, the convergence of Algorithm \ref{alg:VBB} is guaranteed by \cite[Thm.2.1]{Raydan1997BarzilaiBorweinGradient}. The $R$-linear convergence of Algorithm \ref{alg:VBB} can be proved for uniformly convex objective functions \cite{Dai2002NonmonotoneLineSearch}.
 
\begin{algorithm}[H]
	\SetAlgoLined
	\KwData{$\x_{1}$, $\varepsilon>0$, ${\text{MaxIt}>1}$, $\alpha_{1}\in [\alpha_{\text{min}},\alpha_{\text{max}}]$, $\sigma$, $\delta \in(0,1)$, $M$, $q$, $k=1$. }
	\KwResult{$\x_{k+1}$.}
	
	\While{$\|\g_{k}\|_{2}>\varepsilon$ \rm{or} $k<\rm{MaxIt}$}{
		\eIf{$f(\x_{k}-\gamma_{k}\frac{1}{\alpha_{k}}\g_{k})\le\max_{1\le j\le \min\{k,\,M\}} \{f(\x_{k-j+1})\} -\sigma\gamma_{k}\frac{1}{\alpha_{k}}\g_{k}^{\T}\g_{k}$}{
		  $\x_{k+1}=\x_{k}-\frac{1}{\alpha_{k}}\g_{k}$;\\
		  	\eIf{$\sss_{k}^{\T}\yy_{k}<0$}{$\alpha_{k+1}=\hat{\alpha}_{k+1}$;}{calculate $\bar{\alpha}_{k+1}=\alpha_{k+1}^{PBB}$ by \eqref{VBB} with \eqref{mk} \big($\bar{\alpha}_{k+1}=\alpha_{k+1}^{BB2}$ when $m_{k}=0$ in \eqref{mk}\big);\\
		  	set $\alpha_{k+1}=\min\{\max\{\bar{\alpha}_{k+1}, \alpha_{\min}\}, \alpha_{\max}\}$;}
			 set $k=k+1$;
		}{
			$\gamma_{k}=\delta\gamma_{k}$.
		}
	
	}
	\caption{Parameterized \BB method for nonquadratic problems}
	\label{alg:VBB}
\end{algorithm}
The selection of the initial step size and the treatment of uphill direction (i.e., $\sss_{k-1}^{\T}\yy_{k-1}\le0$) are two important factors affecting the performance of Algorithm \ref{alg:VBB}. Popular choice for the initial step size are $\frac{1}{\alpha_{1}}=1$ (cf. eg., \cite{Serafino2018steplengthselectiongradient,Raydan1997BarzilaiBorweinGradienta}) or $\frac{1}{\alpha_{1}}=\frac{1}{\|\g_{1}\|}$ (cf. eg., \cite{Dai2003Alternatestepgradient}), where the norm is the Euclidean $2$-norm or the $\infty$-norm. If $\sss_{k-1}^{\T}\yy_{k-1}<0$, then  $\alpha_{k}^{PBB}$ may be negative (except $m_{k}=0.5$). Therefore, the tentative $\alpha_{k}$ is replaced by a certain $\hat{\alpha}_{k}>0$. A possible choice is $\frac{1}{\hat{\alpha}_{k}}=\frac{\|\sss_{k-1}\|_2}{\|\yy_{k-1}\|_{2}}$ in  \cite{Burdakov2019StabilizedBarzilaiBorwein}. \text{Raydan} \cite{Raydan1997BarzilaiBorweinGradient} suggests using $\frac{1}{\hat{\alpha}_{k}}=\max(\min(\|\g_{k}\|_{2}^{-1}, 10^{5}),1)$, which makes the sequence $\{\frac{1}{\alpha_{k}}\}$ remains uniformly bounded while keeping $\|\frac{1}{\hat{\alpha}_{k}}\g_{k}\|_{2}$ moderate. Some authors use $\frac{1}{\hat{\alpha}_{k}}=\|\g_{k}\|_{2}^{-1}$, like the initial step size setting, similar to the restart operation.  There are other options, such as reuse the previous step size \cite{Park2020VariableMetricProximal}, which employs a cyclic gradient step size strategy \cite{Dai2006cyclicBarzilaiBorwein}. In Algorithm \ref{alg:VBB}, Line $7$, if $m_{k}<10^{-8}$, we set $\alpha_{k}^{PBB}=\alpha_{k}^{BB2}$ in practice, which is equivalent to a truncation strategy.    

\section{Numerical experiments}\label{sec:numerical} 
In this section, we conduct preliminary numerical experiments\footnote{All experiments were implemented in \text{MATLAB R2024a}. All the runs were carried out on a PC with an 12th Gen Intel(R) Core(TM) i7-12700H 2.30 GHz and 32 GB of RAM.} to illustrate the performance and convergence behavior of \text{PBB} method and numerically verify the validity of the parameter $m_{k}$ in \eqref{mk}. We first report the numerical performance of our step size $\frac{1}{\alpha_{k}^{PBB}}$ \eqref{VBB} with various $q$ in \eqref{mk}, then conduct numerical comparison experiments with some outstanding representative BB-like methods.  
\subsection{Choice of $q$}\label{subsec:choice q}
In this subsection, we evaluate the influence of the scaling factor $q$ on the interpolation parameter $m_{k}$ in \eqref{mk} using the performance profile \cite{Dolan2002Benchmarkingoptimizationsoftware}. The cost of solving each problem is normalized according to the lowest cost of solving that problem to obtain the performance ratio $\tau$. The most efficient method solves a given problem with a performance ratio $1$, while all other methods solve the problem with a performance ratio of at least $1$. In order to grasp the full implications of our test data regarding the solvers' probability of successfully handing a problem, we display a log scale of the performance profiles. Since we are also interested in the behavior for $\tau$ close to $1$, we use a base of $2$ for the scale \cite{Dolan2002Benchmarkingoptimizationsoftware}. Therefore, the value of $\rho_{s}(\log_2(1))$ is the probability that a solver $s$ will win over the rest of the solvers. Unless otherwise specified, the performance profiles mentioned in subsequent experiments are all $\log_2$ scaled. For convenience, we denote $\omega=\log_2(\tau)$ in this paper.
 
Consider the following quadratic function from \cite{DeAsmundis2014efficientgradientmethod}: 
\begin{equation}\label{pro:qua}
f(\x)=\frac{1}{2}(\x-\x_{*})^{\T}A(\x-\x_{*}),
\end{equation}
where $\x_{*}$ is uniformly and randomly generated from $[-10, 10]^{n}$, $A=Q\cdot\text{diag}(v_{1},\ldots,v_{n})\cdot Q^{\T}$ with  $Q=(I-2\omega_3\omega_3^{\T})(I-2\omega_2\omega_2^{\T})(I-2\omega_1\omega_1^{\T})$ for $\omega_1$, $\omega_2$, and $\omega_3$ being unit random vectors, $v_1=1$, $v_n=\kappa$, and $v_j$ is randomly generated between $1$ and $\kappa$ for $j=2,\ldots, n-1$. 

We set $n\in\{100, 1000\}$ and $\kappa=10^3, 10^4, 10^5, 10^6$. The initial point is the null vector of $\mathbb{R}^{n}$. The stopping criterion is either that the gradient at the $k$th iteration satisfies that $\|\g_{k}\|_{2}\le\varepsilon\|\g_{1}\|_{2}$ with $\varepsilon=10^{-6}, 10^{-8}, 10^{-10}$ or the number of iterations exceeds $20000$. The initial step size is $\frac{\g_{1}^{\T}\g_{1}}{\g_{1}^{\T}A\g_{1}}$. For each of the above settings, we execute the program independently $10$ times to examine the impact of different $q$-values on the performance of $m_{k}$. We consider the case where $q$ gradually increases from $1$ to $8$. Intuitively, a larger value of $q$ facilitates the rapid approximate of $m_{k}$ towards either $1$ or $0$. Figure \ref{fig:parameterVBB} presents the performance profiles by the PBB methods with these $q$-values using the number of iterations as the metric. 

It is evident from Figure \ref{fig:parameterVBB} that the best choice of $q$ appear to be $4$ or $8$ for $n=100, 1000$. Through this paper, based on this result, we set $q=8$ in $\alpha_{k}^{VBB}(m_{k})$. In this problem, compared with BB1 and BB2, \text{PBB} is effective even for low $q$. This demonstrates that the generation scheme \eqref{mk} for parameter $m_{k}$ is effective.

\begin{figure}[h!]
	\centering
	\subfigure[n=100]{
		\includegraphics[width=0.5\textwidth]{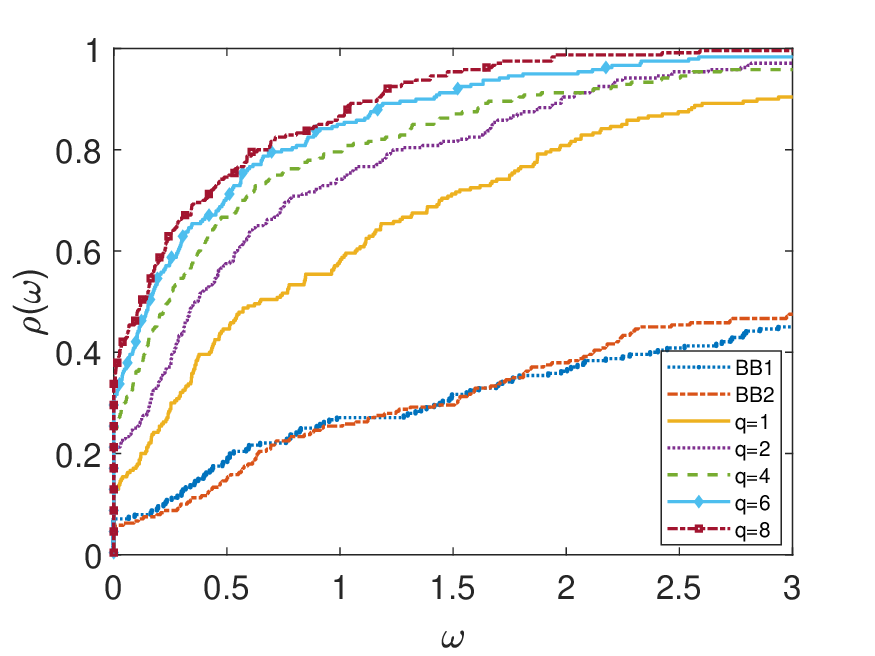}}\hspace{-15pt}
	\subfigure[n=1000]{
		\includegraphics[width=0.5\textwidth]{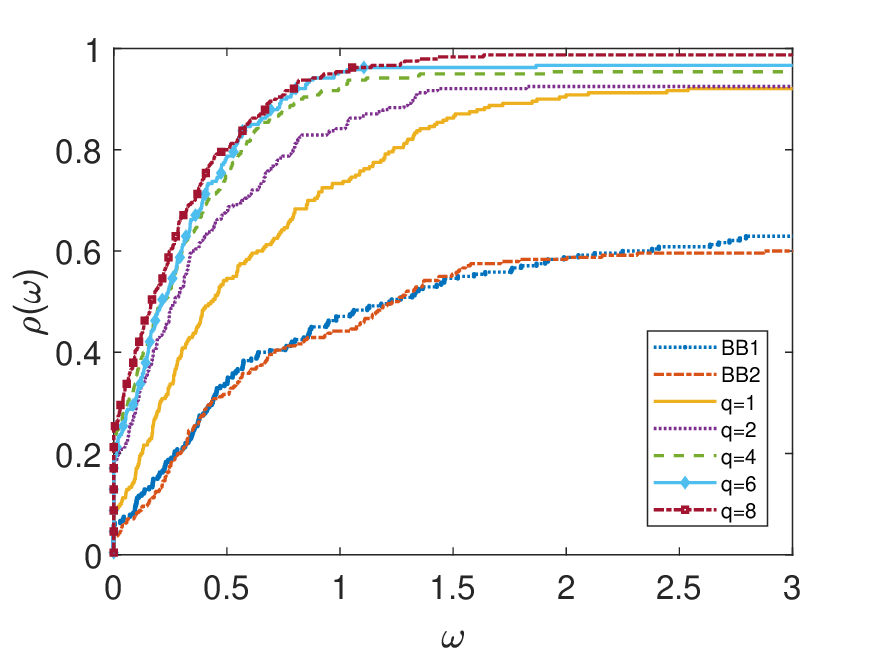}}\\
	\caption{\textit{Performance profiles of $\alpha_{k}^{PBB}(m_{k})$ with different $q$ for $n=100$, $n=1000$, number of iterations.}}	
	\label{fig:parameterVBB}	
\end{figure}

\subsection{Comparing with the state-of-the-art algorithms for quadratic problems}
We consider the quadratic function \eqref{pro:qua} with seven kinds of settings of $v_{j}$ summarized in Table \ref{tab:spectrum} from \cite{Dai2019familyspectralgradient}. We set $n=1000$ and the other settings including the stop criterion are the same as in Section \ref{subsec:choice q}. To evaluate the performance of the PBB method, we compare it with several representative BB-like methods. \cite{Dai2019familyspectralgradient} proposes an adaptive truncated cyclic (ATC) strategy, and numerical results show that it is an effective BB-like step size:
\begin{equation}\label{ATC}
	\alpha_{k}^{ATC}=\begin{cases}
	\alpha_{k}^{BB1},\quad&\text{if}\quad\text{mod}(k,m)=0,\\
	\widetilde{\alpha}_{k},\quad&\text{otherwise},
	\end{cases}
\end{equation} 
where the cyclic length $m$ is a positive integer and 
\begin{equation*}
\widetilde{\alpha}_{k}=\begin{cases}
\alpha_{k}^{BB1},\quad&\text{if}\quad\alpha_{k-1}\le\alpha_{k}^{BB1},\\
\alpha_{k}^{BB2},\quad&\text{if}\quad\alpha_{k-1}\ge\alpha_{k}^{BB2},\\
\alpha_{k-1},\quad&\text{otherwise}.
\end{cases}
\end{equation*} 
\cite{Ferrandi2023harmonicframeworkstepsize} provides a harmonic framework step size as follows
\begin{equation}\label{TBB}
\alpha_{k}^{TBB}(\tau_{k})=\frac{\yy_{k-1}^{\T}(\yy_{k-1}-\tau_{k}\sss_{k-1})}{\sss_{k-1}^{\T}(\yy_{k-1}-\tau_{k}\sss_{k-1})},
\end{equation}
where $\tau_{k}$ is the target parameter. In a sense, $\alpha_{k}^{TBB}$ is equivalent to a generalized convex combination of $\alpha_{k}^{BB1}$ and $\alpha_{k}^{BB2}$, which is adjusted by the target parameter $\tau_{k}$.

\begin{table}[h!]
	\centering
	\caption{Different settings of $v_{j}$ for the problem \eqref{pro:qua}.}
	\setlength{\tabcolsep}{10pt}{
		\begin{tabular}{cccc|cccc}
		\toprule
		P     & \multicolumn{3}{c|}{$v_{j}$} & P     & \multicolumn{3}{c}{$v_{j}$} \\
		\midrule
		1     & \multicolumn{3}{c|}{$\{v_{2},\ldots,v_{n-1}\}\subset(1,\kappa)$} & \multirow{3}[4]{*}{5} & \multicolumn{3}{c}{$\{v_{2},\ldots,v_{n/5}\}\subset(1,100)$} \\
		\cmidrule{1-4}    \multirow{2}[2]{*}{2} & \multicolumn{3}{c|}{$\{v_{2},\ldots,v_{n/5}\}\subset(1,100)$} &       & \multicolumn{3}{c}{$\{v_{n/5+1},\ldots,v_{4n/5}\}\subset(100,\frac{\kappa}{2})$} \\
		& \multicolumn{3}{c|}{$\{v_{n/5+1},\ldots,v_{n-1}\}\subset(\frac{\kappa}{2},\kappa)$} &       & \multicolumn{3}{c}{$\{v_{4n/5+1},\ldots,v_{n-1}\}\subset(\frac{\kappa}{2}, \kappa)$} \\
		\midrule
		\multirow{2}[2]{*}{3} & \multicolumn{3}{c|}{$\{v_{2},\ldots,v_{\frac{n}{2}}\}\subset(1,100)$} & \multirow{2}[2]{*}{6} & \multicolumn{3}{c}{$\{v_{2},\ldots,v_{10}\}\subset(1,100)$} \\
		& \multicolumn{3}{c|}{$\{v_{n/2+1},\ldots,v_{n-1}\}\subset(\frac{\kappa}{2},\kappa)$} &       & \multicolumn{3}{c}{$\{v_{11},\ldots,v_{n-1}\}\subset(\frac{\kappa}{2},\kappa)$} \\
		\midrule
		\multirow{2}[2]{*}{4} & \multicolumn{3}{c|}{$\{v_{2},\ldots,v_{4n/5}\}\subset(1,100)$} & \multirow{2}[2]{*}{7} & \multicolumn{3}{c}{$\{v_{2},\ldots,v_{n-10}\}\subset(1,100)$} \\
		& \multicolumn{3}{c|}{$\{v_{4n/5+1},\ldots,v_{n-1}\}\subset(\frac{\kappa}{2},\kappa)$} &       & \multicolumn{3}{c}{$\{v_{n-9},\ldots,v_{n-1}\}\subset(\frac{\kappa}{2},\kappa)$} \\
		\bottomrule
	\end{tabular}}%
	\label{tab:spectrum}%
\end{table}%

We compare the \text{PBB} method with the BB1, BB2 \eqref{BB steps}, \text{ABB} \eqref{ABB}, \text{ATC} \eqref{ATC}, and \text{TBB} \eqref{TBB} methods, where the parameters of \text{ABB} and \text{ATC} are the same as those in \cite{Dai2019familyspectralgradient}, and the parameter of \text{TBB} is taken as $\tau_{k}=-\cot\theta_{k}$ as suggested in  \cite{Ferrandi2023harmonicframeworkstepsize}. In addition, we compare it with the BBQ algorithm \cite{Huang2021EquippingBarzilaiBorwein} which has the two dimensional quadratic termination property, and the parameters are selected according to the suggestion of \cite{Huang2021EquippingBarzilaiBorwein}. We also consider two effective variants of \text{ABB}, which we indicate with \text{ABBmin} \cite{Frassoldati2008Newadaptivestepsize} and \text{ABBbon} \cite{Bonettini2009scaledgradientprojection}. In the first one, we select the smallest BB2 step size over the last $m+1$ iterations when $\cos^{2}\theta_{k}$ is small:
\begin{equation}
	\alpha_{k}^{ABBmin}=\begin{cases}
	\max\{\alpha_{j}^{BB2}|j=\max\{1,k-m\},\ldots,k\},\quad&\text{if}\quad\cos^2\theta_{k}<\xi,\\
	\alpha_{k}^{BB1},\quad&\text{otherwise}.
	\end{cases}
\end{equation} 
\text{ABBbon} is defined in the same way as \text{ABBmin} but with an adaptive threshold value $\xi_{k}$ as follows
\begin{equation}
\xi_{k+1}=\begin{cases}
0.9\xi_{k},\quad&\text{if}\quad\cos^2\theta_{k}<\xi_{k},\\
1.1\xi_{k},\quad&\text{otherwise},
\end{cases}
\end{equation} 
with $\xi_{0}=0.5$. It should be noted that ABB, ABBmin, and ABBbon are currently recognized as efficient spectral gradient descent methods. BBQ demonstrates exceptional performance, leveraging its two dimensional quadratic termination property. Both ATC and TBB can be viewed as certain combination of BB1 and BB2. Consequently, we choose these algorithms for comparison with PBB to validate the latter's effectiveness.

Figure \ref{fig:QuaCompare} displays the performance profiles by these different gradient descent methods using the number of iterations as the metric. We focus on observing the height of $\rho(\omega)$ corresponding to different methods at $\omega=0$. It is evident from Figure \ref{fig:QuaCompare} that BBQ and \text{ABBbon} perform best overall among these compared methods, which is attributed to the fact that BBQ has the two dimensional quadratic termination property on one hand, and both of them use the adaptive alternating threshold strategy on the other hand. Under low-precision conditions (i.e., $\varepsilon=10^{-6}, 10^{-8}$), \text{PBB} demonstrates significant superiority over \text{ABBmin}, ABB, and the other compared algorithms. Under high-precision condition (i.e., $\varepsilon=10^{-10}$), \text{PBB} performs comparably to ABBmin while outperforming ABB and the other compared algorithms. As analyzed in \cite{Ferrandi2023harmonicframeworkstepsize}, \text{ABBbon} and \text{ABBmin} are particularly beneficial for quadratic problems: where the Hessian matrix is constant, the old $\alpha_{j}^{BB2}$ for some $j<k$ still approximates some eigenvalues of the Hessian matrix. For nonquadratic problems, the effectiveness of this delay operation deteriorates, and the quadratic termination property of BBQ also degrades. Specifically, see the numerical results in Section \ref{sec:NonQua}.  
\begin{figure}[h!]
	\centering
	\subfigure[$\varepsilon=10^{-6}$]{
		\includegraphics[width=0.34\textwidth]{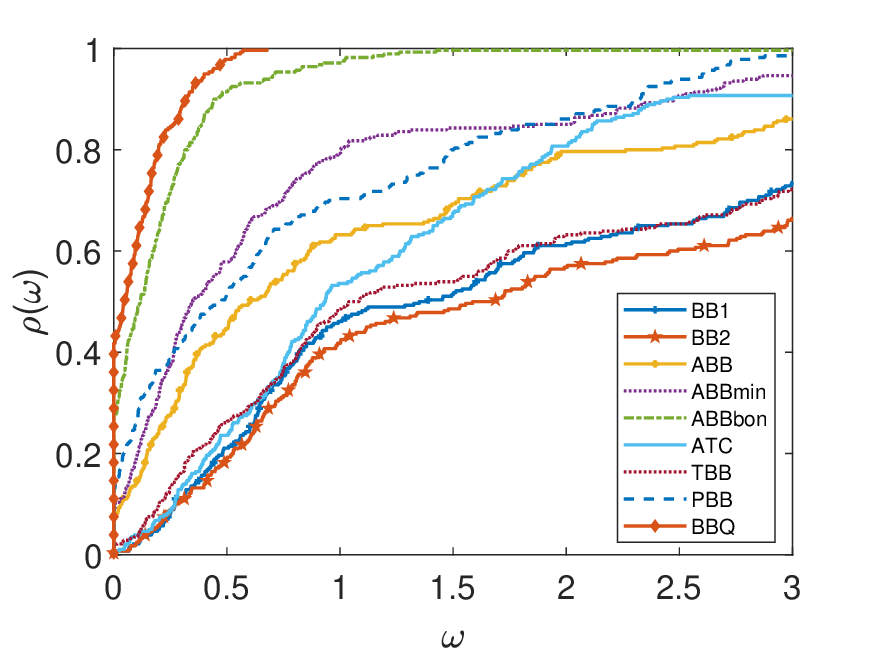}}\hspace{-15pt}
	\subfigure[$\varepsilon=10^{-8}$]{
		\includegraphics[width=0.34\textwidth]{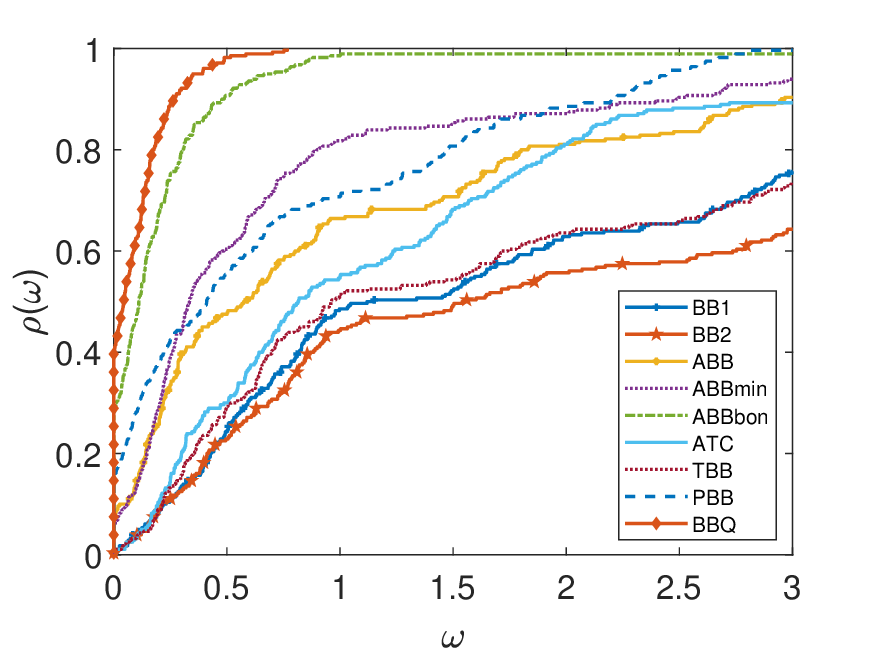}}\hspace{-15pt}
	\subfigure[$\varepsilon=10^{-10}$]{
		\includegraphics[width=0.34\textwidth]{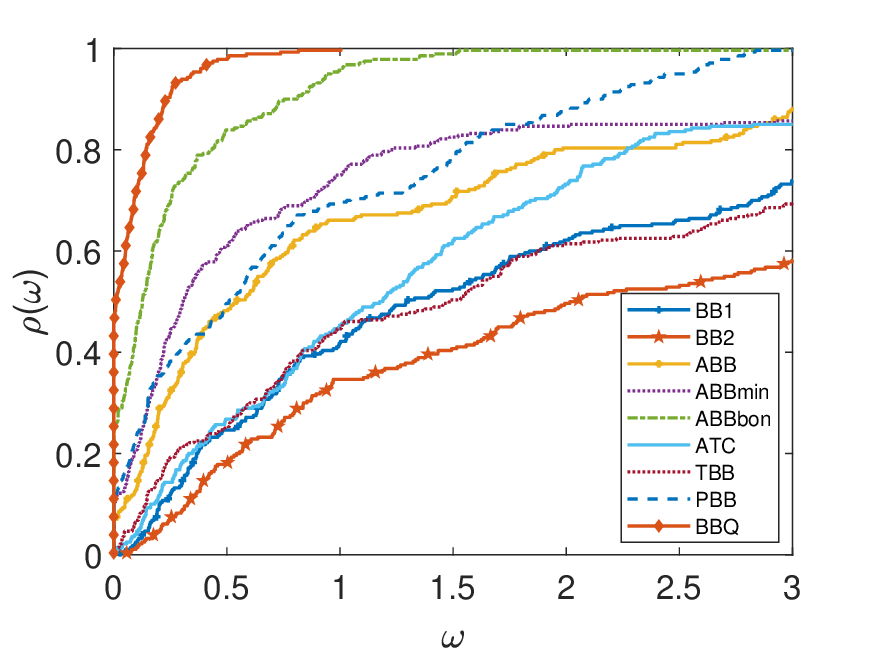}}\\
	\caption{\textit{Performance profiles of  different spectral gradient descent methods on random quadratic problem \eqref{pro:qua} with spectral distributions in Table \ref{tab:spectrum}, iteration numbers metric.}}	
	\label{fig:QuaCompare}	
\end{figure}

We next compare these methods on a two-point boundary value problem \cite{Dai2003Alternateminimizationgradient,Huang2022accelerationBarzilaiBorweinmethod} which can be transferred as a linear system $A\x=\mathbf{b}$ by the finite difference method. In particular, the matrix $A=(a_{i,j})$ is given by
\begin{equation}\label{A}
	a_{i,j}=\begin{cases}
	\frac{2}{h^2},\quad&\text{if}\quad i=j,\\
	-\frac{1}{h^2},\quad&\text{if}\quad i=j\pm 1,\\
	0,\quad&\text{otherwise},
	\end{cases}
\end{equation}
where $h=11/n$. Obviously, the condition number $\kappa$ of matrix $A$ increases with its size $n$. Likewise, we consider the objective function \eqref{pro:qua}, where $\x_{*}$ is a randomly generated as the preceding test. 

We set the cycle depth $m=8$ in \text{ATC} algorithm, and the parameter settings of the other compared algorithms are consistent with those in the preceding test. We consider $n=500, 1000, 2000, 3000, 5000$, with the initial point $\x_{1}=\mathbf{1}$. Ten independent runs are performed with these settings. Figure \ref{fig:Two-pointValue} presents the performance profiles by PBB and other compared methods using the number of iterations as the metric.

As evidenced in Figure \ref{fig:Two-pointValue}, the performance of PBB is comparable to that of BBQ at convergence tolerances of $\varepsilon=10^{-4}$ and $\varepsilon=10^{-8}$, with both methods outperforming the remaining approaches. When the convergence tolerance $\varepsilon=10^{-6}$, PBB demonstrates superior performance, significantly outperforming BBQ as well as other compared methods.
\begin{figure}[h!]
	\centering
	\subfigure[$\varepsilon=10^{-4}$]{
		\includegraphics[width=0.34\textwidth]{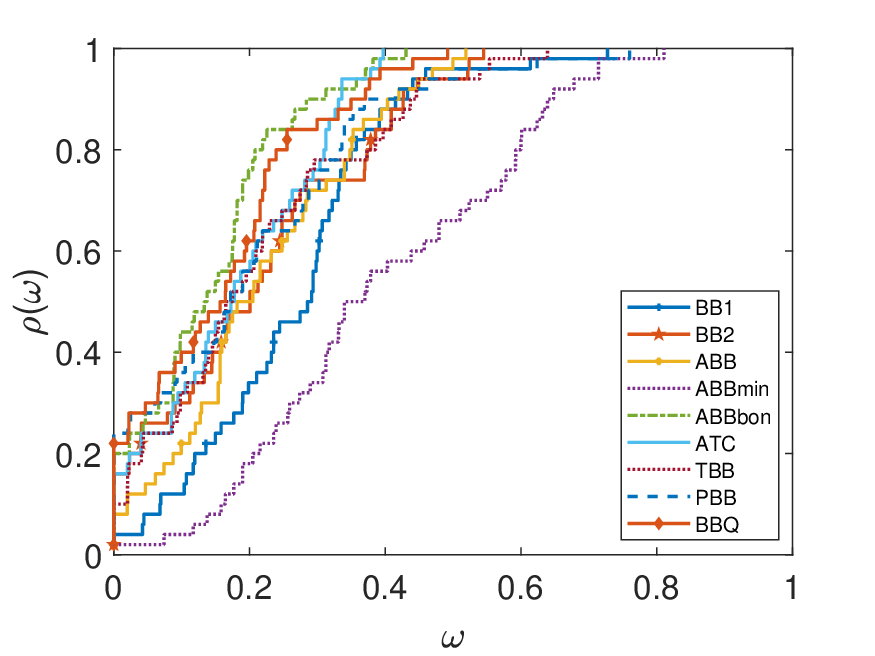}}\hspace{-15pt}
	\subfigure[$\varepsilon=10^{-6}$]{
		\includegraphics[width=0.34\textwidth]{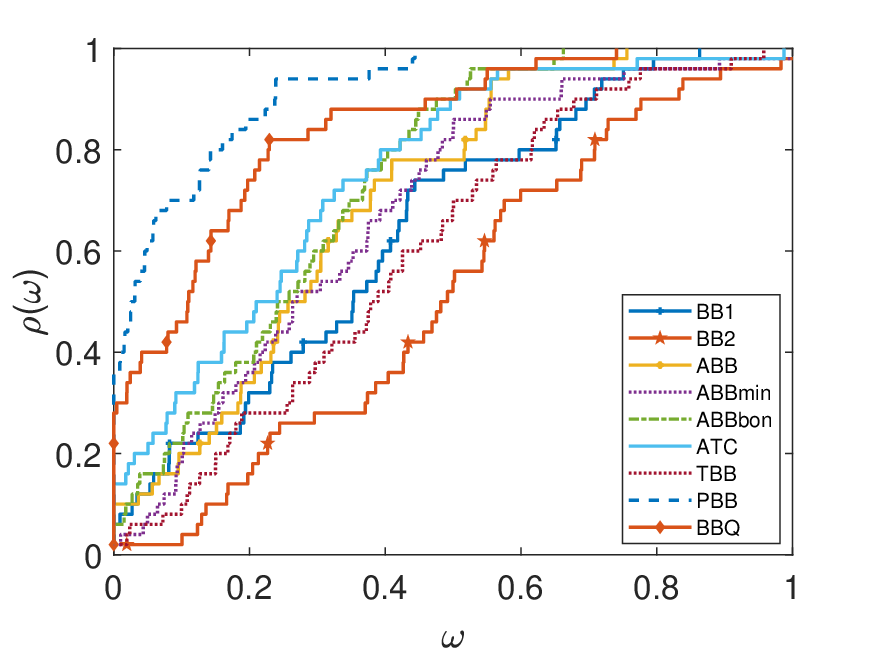}}\hspace{-15pt}
	\subfigure[$\varepsilon=10^{-8}$]{
		\includegraphics[width=0.34\textwidth]{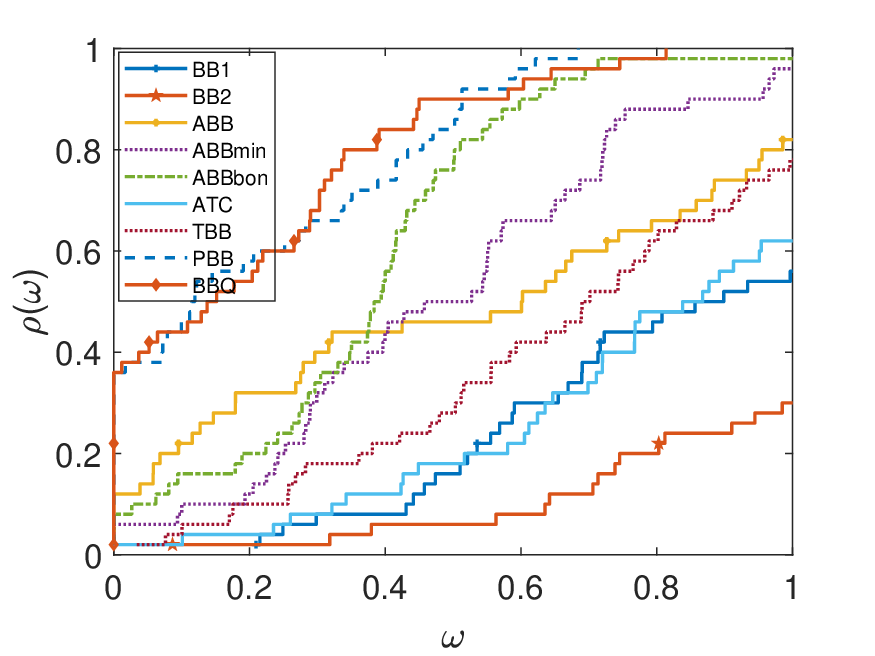}}\\
	\caption{\textit{Performance profiles of  different spectral gradient descent methods on the two-point boundary value problem with $A$ from \eqref{A}, iteration numbers metric.}}	
	\label{fig:Two-pointValue}	
\end{figure}

\subsection{Solving nonquadratic minimization problems}\label{sec:NonQua}
In this part, we solve some \text{nonquadratic} optimization problems. Generally, as described in Section \ref{sec:nonqua}, we need more adjustable parameters than in quadratic optimization. We maintain the parameter settings in \cite{Serafino2018TwoPhaseGradient}, set $\alpha_{\min}=10^{-30}$, $\alpha_{\max}=10^{30}$, $\sigma=10^{-4}$, $\delta=\frac{1}{2}$, $M=10$, and the maximum number of internal non-monotonic line searches to $100$ per external iteration. The wide step size bound is to choose BB-like step size as much as possible. Following Raydan \cite{Raydan1997BarzilaiBorweinGradienta}, we choose $\frac{1}{\hat{\alpha}_{k}}=\max(\min(\|\g_{k}\|_{2}^{-1}, 10^{5}),1)$ as the replacement for negative step size, and take $\frac{1}{\alpha_{1}}=1$ as initial step size. We terminate the algorithm when $\|\g_{k}\|_{2}<\varepsilon\|\g_{1}\|_{2}$ or the number of iterations reaches $20000$ or the number of function evaluations reaches $10^5$. The parameters of each algorithm are consistent with the preceding settings. 

In terms of comparing the performance of different algorithms, we consider the number of function evaluations rather than the number of iterations, which is equal to the number of gradient evaluations. In algorithms with line search operation, each execution of line search requires evaluating the function value but not the gradient. Therefore, the number of function evaluations is the main computational cost and includes the number of iterations.  

We first consider the classical Rosenbrock function \cite{Rosenbrock1960AutomaticMethodFinding}
\begin{equation}\label{equ:Rosen}
	f(\x)=c\big(\x^{(2)}-(\x^{(1)})^2\big)^{2}+\big(1-\x^{(1)}\big)^{2},	
\end{equation}
which is often used as a test case for optimization algorithms, where $c$ is a constant that controls the difficulty of the problem, the initial point is $\big(\x_{1}^{(1)}, \x_{1}^{(2)}\big)=(-1.2, 1)$. The global minimum is inside a long, narrow, and parabolic-shaped flat valley. Setting the initial step size $\alpha_{1}^{-1}=1$, we compare the performance of these spectral gradient descent methods. We set the stop criterion as $\|\big(\x_{k}^{(1)}, \x_{k}^{(2)}\big)-\big(\x_{*}^{(1)}, \x_{*}^{(2)}\big)\|_{2}<\varepsilon$, where $\big(\x_{*}^{(1)}, \x_{*}^{(2)}\big)=(1, 1)$ is the minimizer of $f(\x)$ in \eqref{equ:Rosen}. We report in Table \ref{tab:Rosenbrock} the number of function evaluations with different $\varepsilon$. 
\begin{center}
	\setlength{\tabcolsep}{9pt}{
		\begin{longtable}[h!]{ccccccccccc}
			\captionsetup{width=0.9\textwidth}
			\caption{The performance of the BB1, BB2, ABB, ABBmin, ABBbon, ATC, TBB, BBQ, PBB methods on the Rosenbrock problem \eqref{equ:Rosen}, based on the number of function evaluations.}
			\label{tab:Rosenbrock}\\
			\toprule
			\multicolumn{1}{c}{P} & \multicolumn{1}{c}{$\varepsilon$} & \multicolumn{1}{c}{BB1}&
			\multicolumn{1}{c}{BB2}&
			\multicolumn{1}{c}{ABB}&
			\multicolumn{1}{c}{ABBmin}&
			\multicolumn{1}{c}{ABBbon}&
			\multicolumn{1}{c}{ATC}&
			\multicolumn{1}{c}{TBB}&
			\multicolumn{1}{c}{BBQ}&
			\multicolumn{1}{c}{PBB}\\ 
			\midrule
			
			\endfirsthead
			
			\multicolumn{10}{c}%
			{{\bfseries \tablename\ \thetable{} -- continued from previous page}} \\
			\toprule \multicolumn{1}{c}{c} & \multicolumn{1}{c}{$\varepsilon$} & \multicolumn{1}{c}{BB1}&
			\multicolumn{1}{c}{BB2}&
			\multicolumn{1}{c}{ABB}&
			\multicolumn{1}{c}{ABBmin}&
			\multicolumn{1}{c}{ABBbon}&
			\multicolumn{1}{c}{ATC}&
			\multicolumn{1}{c}{TBB}&
			\multicolumn{1}{c}{BBQ}&
			\multicolumn{1}{c}{PBB}\\ 
			\midrule 
			\endhead
			
			\hline \multicolumn{10}{l}{{Continued on next page}} \\ 
			\endfoot
			
			\bottomrule
			\endlastfoot
		\multirow{4}[2]{*}{$10^2$} & $10^{-1}$ & 92    & 68    & 105   & 107   & 146   & 99    & 2252  & -     & \textbf{67} \\
		& $10^{-2}$ & 100   & 75    & 110   & 197   & 224   & 109   & -     & -     & \textbf{73} \\
		& $10^{-4}$ & 107   & 81    & 110   & 665   & 640   & 116   & -     & -     & \textbf{79} \\
		& $10^{-8}$ & 115   & 89    & 119   & 667   & 655   & 122   & -     & -     & \textbf{85} \\
		\midrule
		\multirow{4}[2]{*}{$10^3$} & $10^{-1}$ & \textbf{184} & 190   & 309   & 355   & 355   & 313   & -     & -     & 214 \\
		& $10^{-2}$ & 195   & \textbf{190} & 344   & 439   & 441   & 345   & -     & -     & 220 \\
		& $10^{-4}$ & 207   & \textbf{197} & 355   & 672   & 689   & 363   & -     & -     & 227 \\
		& $10^{-8}$ & 212   & \textbf{203} & 361   & 1156  & 704   & 370   & -     & -     & 233 \\
		\midrule
		\multirow{4}[2]{*}{$10^{4}$} & $10^{-1}$ & 548   & 475   & 593   & 651   & \textbf{347} & 708   & -     & -     & 485 \\
		& $10^{-2}$ & 571   & 510   & 627   & 732   & \textbf{428} & 791   & -     & -     & 508 \\
		& $10^{-4}$ & 587   & 517   & 646   & 930   & 750   & 804   & -     & -     & \textbf{515} \\
		& $10^{-8}$ & 595   & 606   & 653   & 2169  & 2180  & 836   &-     & -     & \textbf{531} \\
		\midrule
		\multirow{4}[2]{*}{$10^5$} & $10^{-1}$ & 1685  & \textbf{844} & 1035  & 1621  & 1622  & 1474  & -     & -     & 970 \\
		& $10^{-2}$ & 1790  & \textbf{910} & 1122  & 1723  & 1726  & 1624  & -     & -     & 1033 \\
		& $10^{-4}$ & 1813  & \textbf{910} & 1122  & 2024  & 2028  & 1660  & -     & -     & 1038 \\
		& $10^{-8}$ & 1827  & -     & 1134  & 3079  & 2390  & 1691  & -     & -     & \textbf{1045} \\	
\end{longtable}}
\end{center}

It is observed from the numerical results in Table \ref{tab:Rosenbrock} that \text{PBB} performs best when $c=10^2$, \text{BB1} and \text{BB2} have advantages for $c=10^3$, when $c=10^4, 10^5$, \text{ABBbon} and \text{BB2} perform better than \text{PBB} for low-precision requirements, but \text{PBB} performs well for high-precision requirements, respectively. It can also be observed that \text{PBB} is comparable with \text{BB2} in this \text{nonquadratic} test problem, \text{ABBbon} and \text{ABBmin} no longer have a dominant advantage. In the table, ``-'' indicates that the number of function evaluations exceeds $40000$. One possible reason for this phenomenon is that simply using the cotangent function cannot make $\alpha_{k}^{TBB}$ quickly approach $\alpha_{k}^{BB1}$ or $\alpha_{k}^{BB2}$, causing the iterate to fall into a narrow valley, resulting in a slow convergence speed. The quadratic truncation property of the BBQ method is no longer valid in this nonquadratic problem. Similar numerical phenomena can readily occur in other nonquadratic problems. Therefore, we do not report the \text{TBB} and BBQ methods in the following comparative experiments.    

We then conduct numerical comparison experiments in some commonly used test functions from  \cite{More1981TestingUnconstrainedOptimization,Andrei2008UnconstrainedOptimizationTest,Dixon1989TruncatedNewtonmethod} and the suggested starting points $\x_{1}$ therein, where the names and scales of the tested functions are listed in Table \ref{tab:test set}. The parameter settings, initial point, and termination conditions are as described at the beginning of this subsection, where the tolerance is set to  $\varepsilon=10^{-4}, 10^{-6}, 10^{-8}$. 

\begin{table}[h!]
	\centering
	\caption{Test functions of nonquadratic minimization.}
	\label{tab:test set}
	\setlength{\tabcolsep}{9pt}{
		\begin{tabular}{ccc|ccc}
			\toprule
		P     & Name  & $n$     & P     & Name  & $n$ \\
		\midrule
		P1    & Almost Perturbed Quadratic & 100   & P18   & Extended Powell & 100 \\
		P2    & BIGGSB1 & 100   & P19   & Extended Rosenbrock & 50 \\
		P3    & CUBE  & 2     & P20   & Extended Beale & 100 \\
		P4    & Diagonal4 & 100   & P21   & Extended quadratic penalty QP2 & 100 \\
		P5    & Dixon Price & 100   & P22   & FLETCHCR & 50 \\
		P6    & DIXON3DQ & 100   & P23   & Generalized Rosenbrock & 10 \\
		P7    & DQDRTIC & 100   & P24   & HIMMELBG & 100 \\
		P8    & DIXMAANI & 100   & P25   & LIARWHD & 100 \\
		P9    & DIXMAANJ & 100   & P26   & MCCORMCK & 100 \\
		P10   & DIXMAANK & 100   & P27   & NONSCOMP & 100 \\
		P11   & DIXMAANL & 100   & P28   & NONDIA & 100 \\
		P12   & DIXMAANM & 100   & P29   & Perturbed Quadratic & 100 \\
		P13   & DIXMAANN & 100   & P30   & Perturbed QuadraticDiagonal & 100 \\
		P14   & DIXMAANP & 100   & P31   & Perturbed Tridiagonal Quadratic & 100 \\
		P15   & Extended DENSCHNF & 100   & P32   & POWER & 2000 \\
		P16   & Extended Himmelblau & 100   & P33   & Staircase1 & 100 \\
		P17   & Extended White Holst & 100   &       &       &  \\
	\bottomrule
\end{tabular}}%
\end{table}%

The numerical results are presented in Table \ref{tab:non-quadratic results}. We report the  number of function evaluations required by different methods and summarize the total number of evaluations at the bottom of Table \ref{tab:non-quadratic results}, for different tolerances. From the numerical results, we can see that \text{PBB} requires the smallest number of evaluations, followed by \text{ABB}, BB2, ATC, ABBbon, BB1, and ABBmin. For P12, ``-" is filled for the \text{ABBmin} method since it takes more than maximum number of function evaluations to satisfy the stopping condition. 

Based on the numerical results, it can be concluded that for nonquadratic optimization problems, employing old step size leads to performance degradation of the algorithm, since the local Hessian of nonquadratic objective function ceases to be constant. Furthermore, selecting shorter spectral gradient step sizes appears more suitable for general optimization problems.   
\begin{center}
	\setlength{\tabcolsep}{10pt}{
		\begin{longtable}[h!]{ccccccccc}
			\caption{The number of function evaluations required by the BB1, BB2, ABB, ABBmin, ABBbon, ATC, PBB methods, for the nonquadratic functions in Table \ref{tab:test set}.} \label{tab:non-quadratic results} \\
			\toprule
			\multicolumn{1}{c}{P} & \multicolumn{1}{c}{$\varepsilon$} & \multicolumn{1}{c}{BB1}&
			\multicolumn{1}{c}{BB2}&
			\multicolumn{1}{c}{ABB}&
			\multicolumn{1}{c}{ABBmin}&
			\multicolumn{1}{c}{ABBbon}&
			\multicolumn{1}{c}{ATC}&
			\multicolumn{1}{c}{PBB}\\ 
			\midrule
			
			\endfirsthead
			
			\multicolumn{9}{c}%
			{{\bfseries \tablename\ \thetable{} -- continued from previous page}} \\
			\toprule \multicolumn{1}{c}{P} & \multicolumn{1}{c}{$\varepsilon$} & \multicolumn{1}{c}{BB1}&
			\multicolumn{1}{c}{BB2}&
			\multicolumn{1}{c}{ABB}&
			\multicolumn{1}{c}{ABBmin}&
			\multicolumn{1}{c}{ABBbon}&
			\multicolumn{1}{c}{ATC}&
			\multicolumn{1}{c}{PBB}\\ 
			\midrule 
			\endhead
			
			\hline \multicolumn{9}{l}{{Continued on next page}} \\ 
			\endfoot
			
			\bottomrule
			\endlastfoot
			
		\multirow{3}[1]{*}{P1} & $10^{-4}$ & 14    & 14    & 14    & 14    & 15    & 14    & 14 \\
		& $10^{-6}$ & 20    & 20    & 20    & 20    & 27    & 20    & 20 \\
		& $10^{-8}$ & 36    & 35    & 36    & 35    & 37    & 32    & 31 \\
		\multirow{3}[0]{*}{P2} & $10^{-4}$ & 545   & 398   & 281   & 179   & 254   & 375   & 237 \\
		& $10^{-6}$ & 1232  & 526   & 344   & 267   & 474   & 584   & 288 \\
		&$10^{-8}$ & 1684  & 631   & 603   & 391   & 644   & 1014  & 464 \\
		\multirow{3}[0]{*}{P3} & $10^{-4}$ & 14    & 14    & 14    & 14    & 14    & 14    & 14 \\
		& $10^{-6}$ & 90    & 44    & 80    & 37    & 37    & 88    & 33 \\
		&$10^{-8}$ & 95    & 50    & 95    & 41    & 41    & 96    & 38 \\
		\multirow{3}[0]{*}{P4} & $10^{-4}$ & 8     & 8     & 8     & 9     & 8     & 9     & 9 \\
		&  $10^{-6}$ & 11    & 8     & 11    & 9     & 10    & 11    & 11 \\
		&$10^{-8}$ & 11    & 11    & 11    & 9     & 10    & 11    & 11 \\
		\multirow{3}[0]{*}{P5} & $10^{-4}$ & 27    & 22    & 27    & 24    & 30    & 27    & 22 \\
		& $10^{-6}$ & 280   & 198   & 107   & 120   & 107   & 185   & 84 \\
		& $10^{-8}$ & 2849  & 1669  & 1695  & 7931  & 6673  & 1831  & 1510 \\
		\multirow{3}[0]{*}[1.2em]{P6} & $10^{-4}$ & 926   & 330   & 260   & 185   & 236   & 259   & 152 \\
		& $10^{-6}$& 1538  & 369   & 348   & 258   & 499   & 563   & 261 \\
		&$10^{-8}$ & 2081  & 630   & 577   & 372   & 755   & 918   & 410 \\
		\multirow{3}[0]{*}{P7} & $10^{-4}$ & 19    & 16    & 16    & 13    & 12    & 15    & 16 \\
		& $10^{-6}$ & 20    & 34    & 19    & 16    & 16    & 41    & 17 \\
		&$10^{-8}$ & 38    & 39    & 20    & 19    & 20    & 61    & 26 \\
		\multirow{3}[0]{*}{P8} & $10^{-4}$& 13    & 14    & 13    & 15    & 15    & 14    & 13 \\
		& $10^{-6}$ & 39    & 35    & 38    & 36    & 32    & 32    & 31 \\
		&$10^{-8}$ & 117   & 99    & 98    & 96    & 96    & 123   & 89 \\
		\multirow{3}[0]{*}{P9} & $10^{-4}$ & 14    & 15    & 14    & 19    & 17    & 15    & 15 \\
		& $10^{-6}$& 123   & 65    & 89    & 98    & 91    & 81    & 84 \\
		&$10^{-8}$ & 849   & 887   & 467   & 278   & 526   & 678   & 300 \\
		\multirow{3}[0]{*}{P10} & $10^{-4}$ & 20    & 19    & 20    & 37    & 27    & 21    & 20 \\
		& $10^{-6}$ & 67    & 69    & 61    & 98    & 272   & 81    & 72 \\
		&$10^{-8}$ & 395   & 411   & 318   & 872   & 1040  & 394   & 272 \\
		\multirow{3}[0]{*}{P11} & $10^{-4}$ & 11    & 11    & 11    & 16    & 14    & 11    & 11 \\
		& $10^{-6}$ & 35    & 42    & 51    & 50    & 41    & 41    & 40 \\
		&$10^{-8}$ & 1023  & 625   & 354   & 297   & 299   & 423   & 216 \\
		\multirow{3}[0]{*}{P12} & $10^{-4}$ & 63    & 54    & 50    & 72    & 61    & 60    & 47 \\
		& $10^{-6}$ & 853   & 377   & 426   & - & 292   & 368   & 300 \\
		& $10^{-8}$ & 1298  & 682   & 588   & - & 497   & 715   & 583 \\
		\multirow{3}[0]{*}{P13} & $10^{-4}$ & 26    & 29    & 26    & 35    & 32    & 27    & 26 \\
		& $10^{-6}$& 307   & 233   & 210   & 231   & 183   & 224   & 193 \\
		&$10^{-8}$ & 1248  & 925   & 421   & 425   & 552   & 697   & 402 \\
		\multirow{3}[0]{*}{P14} & $10^{-4}$ & 18    & 18    & 18    & 21    & 23    & 19    & 20 \\
		& $10^{-6}$ & 77    & 83    & 84    & 78    & 82    & 88    & 69 \\
		&$10^{-8}$ & 885   & 480   & 383   & 338   & 450   & 564   & 274 \\
		\multirow{3}[0]{*}{P15} & $10^{-4}$ & 52    & 50    & 51    & 54    & 54    & 53    & 51 \\
		& $10^{-6}$ & 53    & 53    & 54    & 56    & 56    & 55    & 53 \\
		&$10^{-8}$ & 55    & 53    & 54    & 59    & 295   & 57    & 54 \\
		\multirow{3}[0]{*}{P16} & $10^{-4}$ & 17    & 14    & 14    & 14    & 14    & 16    & 14 \\
		& $10^{-6}$ & 18    & 16    & 17    & 15    & 17    & 19    & 16 \\
		&$10^{-8}$ & 21    & 17    & 17    & 17    & 17    & 21    & 16 \\
		\multirow{3}[0]{*}{P17} & $10^{-4}$ & 14    & 14    & 14    & 14    & 14    & 14    & 14 \\
		& $10^{-6}$ & 90    & 44    & 80    & 37    & 37    & 89    & 33 \\
		& $10^{-8}$ & 95    & 50    & 95    & 41    & 41    & 99    & 38 \\
		\multirow{3}[0]{*}{P18} & $10^{-4}$ & 42    & 30    & 51    & 27    & 40    & 32    & 25 \\
		& $10^{-6}$ & 46    & 38    & 58    & 31    & 52    & 51    & 29 \\
		&$10^{-8}$ & 46    & 44    & 60    & 31    & 57    & 56    & 31 \\
		\multirow{3}[0]{*}{P19} & $10^{-4}$ & 98    & 77    & 107   & 123   & 125   & 105   & 70 \\
		& $10^{-6}$ & 109   & 83    & 114   & 13750 & 321   & 116   & 81 \\
		&$10^{-8}$& 110   & 83    & 117   & 13843 & 398   & 119   & 81 \\
		\multirow{3}[0]{*}{P20} & $10^{-4}$ & 44    & 30    & 44    & 25    & 28    & 40    & 28 \\
		& $10^{-6}$ & 49    & 33    & 51    & 32    & 35    & 43    & 33 \\
		&$10^{-8}$ & 50    & 36    & 51    & 32    & 35    & 46    & 34 \\
		\multirow{3}[0]{*}{P21} & $10^{-4}$ & 84    & 94    & 153   & 346   & 301   & 131   & 94 \\
		& $10^{-6}$ & 89    & 96    & 158   & 484   & 472   & 134   & 100 \\
		& $10^{-8}$ & 90    & 96    & 159   & 486   & 473   & 134   & 100 \\
		\multirow{3}[0]{*}{P22} & $10^{-4}$ & 440   & 343   & 241   & 1541  & 970   & 253   & 218 \\
		& $10^{-6}$& 561   & 436   & 320   & 2991  & 1501  & 371   & 321 \\
		& $10^{-8}$ & 765   & 594   & 393   & 3787  & 2066  & 394   & 390 \\
		\multirow{3}[0]{*}{P23} & $10^{-4}$ & 433   & 409   & 462   & 897   & 887   & 373   & 433 \\
		& $10^{-6}$ & 1227  & 763   & 679   & 2732  & 2743  & 689   & 591 \\
		& $10^{-8}$ & 1669  & 1198  & 726   & 4090  & 5008  & 816   & 630 \\
		\multirow{3}[0]{*}{P24} & $10^{-4}$ & 3     & 16    & 12    & 16    & 16    & 16    & 12 \\
		& $10^{-6}$ & 7     & 22    & 18    & 22    & 25    & 22    & 18 \\
		& $10^{-8}$ & 14    & 29    & 25    & 29    & 35    & 29    & 25 \\
		\multirow{3}[0]{*}[1.2em]{P25} & $10^{-4}$ & 38    & 38    & 40    & 41    & 42    & 44    & 39 \\
		& $10^{-6}$ & 44    & 41    & 45    & 49    & 45    & 53    & 43 \\
		& $10^{-8}$ & 47    & 44    & 49    & 52    & 48    & 57    & 45 \\
		\multirow{3}[0]{*}{P26} & $10^{-4}$ & 47    & 37    & 43    & 88    & 48    & 40    & 38 \\
		& $10^{-6}$ & 801   & 385   & 490   & 629   & 482   & 501   & 368 \\
		& $10^{-8}$ & 6432  & 2457  & 2887  & 4340  & 2652  & 2622  & 2031 \\
		\multirow{3}[0]{*}{P27} & $10^{-4}$ & 25    & 21    & 25    & 24    & 23    & 25    & 22 \\
		& $10^{-6}$ & 33    & 34    & 33    & 33    & 32    & 35    & 33 \\
		& $10^{-8}$ & 45    & 46    & 45    & 49    & 44    & 42    & 41 \\
		\multirow{3}[0]{*}{P28} & $10^{-4}$ & 22    & 22    & 22    & 26    & 22    & 24    & 22 \\
		& $10^{-6}$ & 33    & 29    & 33    & 27    & 26    & 29    & 29 \\
		& $10^{-8}$ & 39    & 34    & 39    & 30    & 29    & 30    & 34 \\
		\multirow{3}[0]{*}{P29} & $10^{-4}$ & 52    & 46    & 41    & 41    & 47    & 41    & 44 \\
		& $10^{-6}$ & 91    & 70    & 75    & 79    & 83    & 80    & 68 \\
		& $10^{-8}$ & 130   & 96    & 132   & 98    & 103   & 125   & 105 \\
		\multirow{3}[0]{*}{P30} & $10^{-4}$ & 7     & 7     & 7     & 7     & 7     & 7     & 7 \\
		& $10^{-6}$ & 12    & 12    & 12    & 18    & 16    & 12    & 12 \\
		& $10^{-8}$ & 56    & 41    & 64    & 59    & 55    & 54    & 49 \\
		\multirow{3}[0]{*}{P31} & $10^{-4}$ & 45    & 51    & 42    & 51    & 47    & 49    & 42 \\
		& $10^{-6}$ & 86    & 75    & 72    & 64    & 73    & 89    & 72 \\
		& $10^{-8}$ & 127   & 99    & 131   & 98    & 100   & 114   & 100 \\
		\multirow{3}[0]{*}{P32} & $10^{-4}$ & 80    & 76    & 78    & 72    & 73    & 64    & 61 \\
		& $10^{-6}$ & 4497  & 431   & 272   & 209   & 301   & 4067  & 253 \\
		&$10^{-8}$ & 11493 & 898   & 478   & 323   & 524   & 9386  & 477 \\
		\multirow{3}[1]{*}{P33} & $10^{-4}$ & 36    & 31    & 34    & 51    & 33    & 40    & 30 \\
		& $10^{-6}$ & 309   & 189   & 279   & 186   & 149   & 200   & 187 \\
		& $10^{-8}$ & 741   & 390   & 613   & 380   & 424   & 481   & 381 \\
		\midrule
		\multirow{3}[2]{*}{Total} & $10^{-4}$ & 3297  & 2368  & 2253  & 4111  & 3549  & 2247  & 1880 \\
		& $10^{-6}$& 12847 & 4953  & 4748  & 122778 & 8629  & 9062  & 3843 \\
		&$10^{-8}$& 34634 & 13479 & 11801 & 138964 & 24044 & 22239 & 9288 \\
\end{longtable}}
\end{center}

\section{Concluding remarks}
In this paper, we propose a parameterized BB (\text{PBB}) method from an interpolated least squares perspective. By introducing interpolation parameter $m_{k}$, the obtained interpolated least squares model is a completeness of the two least squares models in original BB method. The original $\alpha_{k}^{BB1}$, $\alpha_{k}^{BB2}$, and $\sqrt{\alpha_{k}^{BB1}\alpha_{k}^{BB2}}$ correspond to $m_{k}=1$, $0$, and  $\frac{1}{2}$, respectively. For each interpolation parameter $m_{k}$, we interpolate between the classical BB1 and BB2 scalars to obtain a new class of step sizes. We analyze the working principle of the ABB method. Starting from the idea of \text{ABB}, using the variation information between two consecutive iterations, we propose a class of interpolation parameter selection schemes. For a typical two dimensional strictly convex quadratic problem, we analyze the convergence of the PBB method by defining the ratio of the absolute values of the gradient components. Numerical results verify the effectiveness of \text{PBB} method for quadratic and general differentiable objective functions. 

Further exploration of the interpolated least squares model \eqref{VLS} is a meaningful work. Fundamentally, model \eqref{VLS} captures the most essential characteristic of the secant equation: namely, that the degree of the variable $\alpha$ corresponding to $\sss_{k-1}$ is higher by one than that of  $\yy_{k-1}$. This model provides an opportunity to extend the range of the BB1 and BB2 step sizes, i.e., to obtain a scalar that is smaller than $\alpha_{k}^{BB1}$ or larger than $\alpha_{k}^{BB2}$. Recent efforts in this direction have been made in \cite{Xu2025ExtendedVariationalBarzilai}. In addition, we find that the PBB method potentially solves the smoothed problem in \cite{Pandiya2025novelglobaloptimization}. Furthermore, enriching the selection of interpolation parameters $m_{k}$ constitutes another interesting avenue for future work. In addition, as demonstrated in Section \ref{sec:ABB}, the three-term alternating strategy proves to be a more effective approach in the spectral gradient descent method. Therefore, it is also meaningful to develop an efficient three-term alternating step size method.

Last but certainly not least, at the theoretical level, it is worthwhile to investigate the properties of the second-order difference dynamical system corresponding to the spectral gradient method.

\section{Acknowledgments}
The author would like to thank Prof. \text{Congpei An} for his patient guidance.

\bibliographystyle{plain}
\bibliography{VBB}

\end{document}